\documentclass[]{amsart}

\usepackage{amssymb,amsthm}
\usepackage{enumitem}
\usepackage{mathtools, hyperref,doi,url,hyperref}

\usepackage{crossreftools}
\usepackage[mathscr]{euscript}
\usepackage{xcolor}

\newtheorem{thm}{Theorem}[section]
\newtheorem{cor}[thm]{Corollary}
\newtheorem{lem}[thm]{Lemma}

\theoremstyle{remark}
\newtheorem{exam}{Example}
\newtheorem{rem}[thm]{Remark}

\newcommand{\C}{\mathbb{C}}%Complex plane
\newcommand{\T}{\mathbb{T}}%Unit circle in field
\newcommand{\D}{\mathbb{D}}%Unit disk in Complex plane
\newcommand{\R}{\mathbb{R}}%Real numbers

\newcommand*\BMO{\textrm{BMO}}

\newcommand{\Herglotz}{K}

\newcommand{\norm}[1]{\left\Vert#1\right\Vert}%norm
\newcommand{\abs}[1]{\left\vert#1\right\vert}%absolute value/modulus
\DeclareMathOperator\DOM{dom}

\DeclareMathOperator\esssup{ess\,sup}

\newcommand\COCO[1]{[#1]^{\sim}}

\newcommand\MOD[1]{ \quad (\kern -6pt \mod{#1})}

\DeclareMathOperator\EXP{exp}

\title[Sharp exponential integrability of conjugate functions]{Sharp exponential integrability of conjugate functions}

\keywords{essential range, essential gap, exponential integrability,harmonic conjugate, conjugate function, Hilbert transform}

\subjclass{42A50, 46E30}

\author{David Norrbo and Jani Virtanen}

\address{Faculty of Science and Engineering, Åbo Akademi University, 20500 Åbo, Finland}
\email{david.norrbo@ntnu.no}

\address{University of Eastern Finland, Joensuu, Finland. University of Reading, Reading, UK. University of Helsinki, Helsinki, Finland}
\email{jani.virtanen@uef.fi, j.a.virtanen@reading.ac.uk, and
jani.virtanen@helsinki.fi}

\thanks{}

\begin{document}

\maketitle

\begin{abstract}
We prove that if a real-valued function $f\in L^1$ on the complex unit circle has a gap of width at least $\pi$ in its essential range, then $\exp(\widetilde f)$ is not integrable, where $\widetilde f$ is the conjugate function. More generally, the exponential function can be replaced by any nonnegative convex function $Y$ satisfying
$$
\int_0^\infty Y(x) e^{-x}\, dx = \infty.
$$
We also establish a local version on arcs of the unit circle: Under a natural condition on the inverse images of the two sides of the gap, $\widetilde f$ fails to be $Y$-integrable on the arc; in particular, it suffices that one of these inverse images is not an interval modulo null sets. Finally, we obtain corresponding results for complex-valued functions.
\end{abstract}

\section{introduction}

In complex analysis, an interesting question is, given a real valued harmonic function $u$, what can be said about its harmonic conjugate $v$? The relationship, described by the Cauchy Riemann equations, appeared already in \cite[§58]{dAlembert-1752} (1752) when d'Alembert described fluid dynamics. In a more modern era it was proved that the harmonic conjugate of a bounded function $u\in L^{\infty}$ is not necessarily bounded \cite[p.~363]{Fatou-1906} (1906), but it belongs to $\BMO$ \cite[Proposition 7.1]{Spanne-1966} (1966). Moreover, as a consequence of the John-Nirenberg inequality, \cite[(2),(3)]{John-1961}, for every nonconstant $f\in \BMO([0,1])$ there is a constant $\gamma>0$ such that $\EXP(\gamma \abs{f}/\norm{f}_{\BMO_{1,*}}) \in L^1$. In \cite{Korenovskii-1990}, the author proved that for every $\gamma<2/e$ the preceding statement holds, while it fails for $\gamma\geq  2/e$ for the function $[x\mapsto (e/2)\ln (1/x)]\in \BMO([0,1])$. 

Instead of looking at optimal constants concerning restrictions in the codomain $\BMO$, one can also examine how the behavior of $f$ correlates with the exponential integrability of the harmonic conjugate $\tilde{f}$. In \cite{Zygmund-1929} (1929) Zygmund showed that for every $f\in L^{\infty}([0,2\pi[)\setminus\{0\}$ it holds that $\EXP(\gamma \abs{\smash{\tilde{f}}}/\norm{f}_{\infty}) \in L^1([0,2\pi[)$ if $\gamma<\frac{\pi}{2}$.

Continuing the research on how the conjugate function (on a bounded interval) and the Hilbert transform (on $\mathbb R$) behave as functions of the input $f\in\mathbb L^{\infty}$, Stein calculated the distribution functions for $\chi_E$ \cite{Stein-1959}. This was done by first considering $E$ to be a finite union of intervals, using zeros of a certain polynomial obtained from the endpoints of the intervals, and later proving by density arguments that the formula holds for general Lebesgue measurable sets $E$. The distribution functions show that equality cannot be allowed in Zygmund result, more precisely, if $f=2\chi_E-1$ does not equal a constant almost everywhere, then  $\EXP(\gamma \abs{ \smash{\tilde{f}}}) \notin L^1([0,2\pi[)$ for $\gamma\geq \frac{\pi}{2}$ (a similar result for the Hilbert transform on $\mathbb R$). From here, the investigation has been split into two directions, more general functions or integration on a subinterval $I\subset [0,2\pi[$. Although the latter might seem obvious, the proofs of the preceding results relied heavily on the path of integration being the full domain (a parametrized unit circle $[0,2\pi[$ and the whole real line $\mathbb R$). 

In 2021, Gissy et. al. \cite{Gissy-2021} proved the corresponding density and nonintegrability statements for a real valued $f\in L^\infty([0,2\pi[)$ having a gap of width $\pi$ in its essential range. The proof relied, again, on the integration path being the full domain, which allowed tools from harmonic analysis to be used. In the other direction it was proved in \cite{Clancey-1984} (1984) that $(\pi/2)(2\widetilde{\chi_E}-1)$ is not exponentially integrable on any interval $I\subset [0,2\pi[ \MOD {2\pi}$ such that both $I\cap E$ and $I\setminus E$ have positive measure and are not intervals m.n.s. (modulo null sets). For a real valued function $f$, with a gap in the essential range strictly bigger than $\pi$, Shargorodsky \cite{Shargorodsky-1994} (1994) used Zygmund's result for $\gamma<\pi/2$ to convert the problem to the case dealt with in \cite{Clancey-1984}, also noting that it is sufficient if one of $I\cap E$ and $I\setminus E$ is not an interval.   

In this article, we generalized the exponential integrability problem on the unit circle, $[0,2\pi[ \MOD {2\pi}$ to consider functions $f\in L^1$, in particular, unbounded functions. This is done in a more general setting where the exponential function is replaced by a convex function satisfying a certain integrability condition. Moreover, we also generalize Shargorodsky's local result to only demand the gap having width at least $\pi$. This is also done in the setting of $f\in L^1([0,2\pi[)$, not only the bounded functions. Our sufficient condition for nonintegrability is also more general, and is based on density points in the preimage of the two parts of the essential range that lie on a distance of at least $\pi$ from each other (see Theorem \ref{thm:local}). This allows us to obtain non-exponential integrability of $\tilde{f}$, which is more precise than a similar result for $\abs{f}$. We also extend all results about non-exponential integrability to $f$ being complex valued with a gap in form of an infinite strip (arbitrarily placed and rotated) of width $\pi$ dividing the essential range. This solves \cite[Open Problem 3]{Gissy-2021}. 

Concerning the local integrability, the proof of the real case uses a different approach to the earlier literature on this topic. Our surgical approach consists of identifying suitable points to ``cut'' the interval $I$ at points such that we can glue them together (periodic extension) without altering the integrability (see Lemma \ref{lem:mainLem}). We scale the problem on the new interval to match a problem on $[0,2\pi[$ (see Lemma \ref{HilbertTransformAndTranslations}). Due to the choice of our cutting points, the problem can now be solved as a problem on the full domain, the unit circle (see Theorem \ref{thm:global}). 

Due to geometric properties of the Hilbert transform, even if a point is a Lebesgue point for a function, the Hilbert transform might have a singularity. Using the fact that the finite Hilbert transform of $f$ is up to a bounded function comparable to the finite Hilbert transform of half of the mean of $f$ (see \eqref{eq:eqForDeltaHilbertTransform}) makes it possible to relate the behavior of the Hilbert transform well enough to the density of points in the preimage of the essential range to allow us to conclude how the cutting points should be chosen (see Lemma \ref{lem:mainLem}). How quickly the densities converge will determine which finite Hilbert transform to use (see proof of Lemma \ref{lem:mainLem}, and in particular the choice of $\delta_0$).

\section{Introduction/Preliminaries}

Denote the unit circle by $\T=\{e^{i\theta}\in\C: \theta\in [0,2\pi[\}$, identified with $[0,2\pi)$, and define the conjugate function $\tilde f$ of $f\in L^1(\T)$ by
$$
	\tilde f (s) = \frac1{2\pi} \int_0^{2\pi} f(t) \cot \frac{s-t}2\, dt
$$
for almost every $s\in[0,2\pi[$, where the integral is understood in the Cauchy principal value sense. Equivalently, $\tilde f$ is the boundary function of the harmonic conjugate of the Poisson extension $P[f]$ normalized to vanish at the origin.

With a density point for a Lebesgue measurable set $E\subset R$, we mean an $x\in \mathbb R$ such that \\$\lim_{h\to 0^+}\abs{E\cap ]x-h,x+h[}/(2h) = 1$.  

Given a function $f\in L^1_{\textrm{loc}}(\mathbb R)$, a Lebesgue point is an $x\in \DOM f$ for which for any family $\mathcal F_x$ of sets of bounded eccentricity at $x$, we have
\[
\lim_{m(U)\to 0, U\in \mathcal F_x } \frac{1}{\abs{U}} \int_U \abs{ f(t) - f(x) } \, dt  = 0.  
\]

Recall that almost every point in $E$ is a density point of $E$, and almost every point $x\in\mathbb R$ is a Lebesgue point for $f$, see for example Corollaries 3.1.6 and 3.1.7 in  \cite[p.~106--108]{Stein-2005}. It is also worth noting that density points to a set are never isolated, which makes it possible in the assumptions for the local statements to choose $I=]x_0,y_0[$ for one pair of density points, since we can still find new suitable density points inside.

Recall that given a Lebesgue measurable set $E$, almost every point is a density point, and given a Lebesgue measurable function, $f$, almost every point in $E$ is a Lebesgue point of $f$.

Before we give an overview of the sections, we introduce some notation. The essential range of a measurable function $f\colon\T\to \C$ is given by $\bigcap\{  f(\T\setminus N) :  m(N) = 0\}$. We say that a function $F$ analytic in $\D$ is an outer function if
$$
    F(z) = \alpha\exp\left(\int_\T \frac{\zeta+z}{\zeta-z}h(\zeta)\, dm(\zeta)\right), \qquad z\in\D,
$$
where $h\in L^1(\T)$ is real valued and $|\alpha|=1$. The Smirnov class is defined as
\[
N^+:=\{f=g/h : g,h\in H^\infty,\ h \text{ outer function}\}.
\]
We denote by $L_0^{1,\infty}$ the small weak $L^1$ space which is a vector space of measurable functions such that 
\[
\lim_{t\to ^+} t m\{ \xi\in\T : \abs{f(\xi)} \geq t  \} = 0.
\]
Functions in $N^+$ whose boundary function is in $L_0^{1,\infty}$ constitute $H_0^{1,\infty}$. These classes will appear in Section \ref{sec:global}.

The article is structured as follows: We begin, in Section \ref{sec:global}, by presenting the results concerning integration on $\T$, which we call the global setting. In Section \ref{sec:local}, we continue with the local setting, that is, integration on a sub arc. After that we present the complex case in Section \ref{sec:complex}. The main result on $\T$ is Theorem \ref{thm:global}, which is used to obtain the main local result, Theorem \ref{thm:local}. Using these, we obtain the main result for complex valued functions, Theorem \ref{thm:complex}.

In Section, \ref{sec:local} the circle setting will dominate using e.g. Herglotz integral, while \ref{sec:local} is better presented using the parametrized version of $\T$, that is, $[0,2\pi[$ due to the Hilbert transform-type operators. It is also worth mentioning that given a set $J$, any real-valued  function $f\colon J \to \mathbb R$ can be written as 
\[
f = \abs{f} (2\chi_E -1 ),
\]
where $E := f^{-1}([0,\infty[)\subset J$. This representation has been used in, for example, \cite{Shargorodsky-1994} and \cite{Gissy-2021}. It will be used for most results concerning real valued functions, but will not be adapted in any form for complex valued functions.

\section{Global results}\label{sec:global}

We begin by presenting a result, which follows from the proof presented in \cite[p.~203-204]{Burkholder-1977}, credited to A. Baernstein.

\begin{thm}\label{thm:burkholder}
Let $\Omega\subset\C$ be a domain and let $\Phi:\C\to[0,\infty)$ be subharmonic.  If $F$ and $G$ are analytic in $\D$ with
$$
    F(0)=G(0)\in\Omega,\qquad F(\D)\subset\Omega,
$$
and the nontangential boundary values of $G$ satisfy
$$
    G^*(\zeta)\in\C\setminus\Omega \qquad\text{for a.e. }\zeta\in\T,
$$
then
$$
    \sup_{0<r<1}\int_{\T}\Phi(F(r\zeta))\,dm(\zeta)
    \leq\sup_{0<r<1}\int_{\T}\Phi(G(r\zeta))\,dm(\zeta).
$$
\end{thm}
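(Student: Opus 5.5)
The plan is to follow Baernstein's subordination argument. We introduce an intermediate quantity, the "harmonic measure average" of $\Phi$ over $\partial\Omega$ seen from $w_0:=F(0)=G(0)$, and show that the $F$-side is at most this quantity and the $G$-side is at least it. We may assume the right-hand side is finite, since otherwise there is nothing to prove.

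\textbf{Step 1 (the intermediate function).} Let $W$ be planar Brownian motion and $\tau_\Omega$ its exit time from $\Omega$. Define
$$
h(w):=\mathbb E_w\bigl[\Phi(W_{\tau_\Omega});\tau_\Omega<\infty\bigr], \qquad w\in\Omega .
$$
We first dispose of the degenerate case where $\C\setminus\Omega$ is polar. Then $G(B_t)$, a time-changed Brownian motion from $w_0$, a.s. never leaves $\Omega$. This contradicts $G^*\in\C\setminus\Omega$ a.e., because $G(B_t)\to G^*(B_{\tau_\D})$ with $B_{\tau_\D}$ distributed as $m$. So $\C\setminus\Omega$ is nonpolar.

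Next we show that $h$ is harmonic on $\Omega$ whenever $h(w_0)<\infty$. Step 3 gives $h(w_0)<\infty$, and Harnack's principle then gives finiteness throughout $\Omega$. We also show $h\ge\Phi$ on $\Omega$. For this, apply optional stopping to the local submartingale $\Phi(W_{t\wedge\tau_\Omega\wedge\sigma_R})$, where $\sigma_R$ is the exit time from the disc of radius $R$, and let $R\to\infty$. Recurrence of planar Brownian motion and nonpolarity give $\tau_\Omega<\infty$ a.s., and Fatou's lemma handles the passage to the limit.

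\textbf{Step 2 ($F$-side).} Since $F(\D)\subset\Omega$ and $h\ge\Phi$, the function $\Phi\circ F$ is subharmonic and is majorized by the harmonic function $h\circ F$ on $\D$. The mean value property then gives
$$
\int_\T\Phi(F(r\zeta))\,dm(\zeta)\le\int_\T h(F(r\zeta))\,dm(\zeta)=h(F(0))=h(w_0)
$$
for every $r<1$.

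\textbf{Step 3 ($G$-side).} Let $B$ be Brownian motion in $\D$ started at $0$, and let $\tau_r$ be its exit time from $\{|z|<r\}$. By Lévy's conformal invariance, $Z_t=G(B_t)$ is a time-changed Brownian motion started at $w_0$. Let $\sigma$ be the first time $Z$ leaves $\Omega$. As in Step 1, the boundary values force $\sigma<\tau_\D$ a.s., and therefore $h(w_0)=\mathbb E\,\Phi(Z_\sigma)$.

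Because $\Phi\circ G$ is subharmonic, $\Phi(Z_{t\wedge\tau_r})$ is a bounded submartingale for each fixed $r<1$. Optional stopping gives
$$
\mathbb E\,\Phi(Z_{\sigma\wedge\tau_r})\le\mathbb E\,\Phi(Z_{\tau_r})=\int_\T\Phi(G(r\zeta))\,dm(\zeta).
$$
Letting $r\to1$, we have $\sigma\wedge\tau_r=\sigma$ eventually. Fatou's lemma then yields
$$
h(w_0)\le\liminf_{r\to 1}\int_\T\Phi(G(r\zeta))\,dm\le\sup_{0<r<1}\int_\T\Phi(G(r\zeta))\,dm .
$$
In particular $h(w_0)<\infty$. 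Combining this with Step 2 and taking the supremum over $r$ proves the theorem.

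\textbf{Main obstacle.} The delicate point is the limiting arguments, not the structure of the proof. These are the identification $\sigma<\tau_\D$ a.s. from merely nontangential a.e. boundary behaviour, and the exchange of limits with expectations for the possibly unbounded $\Phi$. I would handle the first by using that Brownian paths converge to the boundary nontangentially in the sense of the standard Doob/Burkholder results, so $Z_t\to G^*(B_{\tau_\D})\notin\Omega$ a.s. I would handle the second by working only with stopping at $\tau_r$, where everything is bounded, and passing to the limit solely through Fatou's lemma in the direction that is needed.
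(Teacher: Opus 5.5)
\textbf{There is a genuine gap in Step~1.} The paper gives no proof of its own here; it only points to Baernstein's argument in Burkholder. So your argument has to stand by itself, and it does not. The claim that $h\ge\Phi$ on $\Omega$ is false for unbounded $\Omega$, and Step~2 fails with it.

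Take $\Omega=\{\abs{z}>1\}$ and $\Phi(z)=\log^+\abs{z}$. Brownian motion from $w\in\Omega$ exits through $\T$, where $\Phi=0$. Hence $h\equiv0$, while $\Phi(w)=\log\abs{w}>0$. For the constant map $F\equiv w_0$, Step~2 would then give $\log\abs{w_0}\le 0$. The theorem itself holds in this example, since $\log\abs{F}$ is harmonic and $\log^+\abs{G}$ is subharmonic; it is your proof that breaks.

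The faulty step is the limit $R\to\infty$. From $\Phi(w)\le\mathbb{E}_w\Phi(W_{\tau_\Omega\wedge\sigma_R})$ you need $\limsup_{R}\mathbb{E}_w\Phi(W_{\tau_\Omega\wedge\sigma_R})\le h(w)$. That is the opposite of what Fatou's lemma provides. The dropped term $\mathbb{E}_w[\Phi(W_{\sigma_R});\sigma_R<\tau_\Omega]$ need not vanish: in the example it equals $\log R\cdot\log\abs{w}/\log R=\log\abs{w}$ for every $R>\abs{w}$.

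The harmonic-measure average of $\Phi$ over $\partial\Omega$ ignores mass that escapes to infinity. The right intermediate object is the least harmonic majorant of $\Phi$ on $\Omega$. This is not a side case, because the paper applies the theorem to the unbounded strip $S$.

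\textbf{How to repair it.} Your structure survives with one change. Exhaust $\Omega$ by bounded domains $\Omega_n\ni w_0$ with $\overline{\Omega_n}\subset\Omega_{n+1}$, and set $u_n(w)=\mathbb{E}_w\Phi(W_{\tau_{\Omega_n}})$.
\begin{enumerate}
\item Your Step~1 argument is now legitimate, because everything is bounded. It gives a bounded harmonic $u_n\ge\Phi$ on $\Omega_n$, and $u_n\le u_{n+1}$ there by the strong Markov property.
\item Your Step~3, run with the exit time $\sigma_n$ of $Z$ from $\Omega_n$, gives $u_n(w_0)\le\sup_{0<r<1}\int_\T\Phi(G(r\zeta))\,dm(\zeta)$.
\item Assuming the right-hand side is finite, Harnack's principle makes $u=\lim_n u_n$ a harmonic majorant of $\Phi$ on $\Omega$, and Step~2 applies to $u$.
\end{enumerate}

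\textbf{A second error this repair removes.} Your claim that $\sigma<\tau_\D$ a.s. is false in general. If $G(\D)\subset\Omega$, for example $G=F_b$ from Lemma~\ref{lem:strip} with $\Omega=S$, then $Z$ reaches $\C\setminus\Omega$ only in the limit $t\to\tau_\D$. In that case $\sigma\wedge\tau_r$ never equals $\sigma$, and upper semicontinuity of $\Phi$ gives Fatou's lemma nothing to work with. For $\Omega_n$ the problem disappears: the limit $Z_t\to G^*(B_{\tau_\D})\notin\Omega$ lies at positive distance from the compact set $\overline{\Omega_n}$, so $\sigma_n<\tau_\D$ a.s. genuinely holds.

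\textbf{Scope of the gap.} For the paper's actual use, $\Omega=S$ and $\Phi=Y(\Im z)$ is continuous. There $h(ib)=\int_\T Y(\Im F_b^*)\,dm=\infty$ by Lemma~\ref{lem:strip}. So your Step~3 alone already yields what the paper needs, once the case $\sigma=\tau_\D$ is handled by continuity of $\Phi$. The gap concerns the theorem in its stated generality.
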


Let
$$
    S=\left\{z\in\C:|\Re z|<\frac{\pi}{2}\right\}.
$$

\begin{lem}\label{lem:strip}
Let $Y:\R\to[0,\infty)$ be convex and satisfy the growth condition
\begin{equation}\label{e:Y-critical-growth}
    \int_0^\infty Y(x)e^{-x}\,dx=\infty.
\end{equation}
Set $\Phi_Y(z)=Y(\Im z)$. The function
$$
    F(z)=-i\log\frac{1+z}{1-z},\qquad z\in\D,
$$
where the logarithm is the branch on the right half-plane that is real on $(0,\infty)$, maps $\D$ conformally onto $S$. Further, for $\zeta\in\T\setminus\{\pm1\}$,
\begin{equation}\label{e:boundary formula}
    \Im F^*(\zeta)=\log\frac{|1-\zeta|}{|1+\zeta|}.
\end{equation}
Moreover, for every $b\in\R$, the function $F_b=F+ib$ satisfies
\begin{equation}\label{e:strip-divergence}
    \sup_{0<r<1}\int_{\T}\Phi_Y(F_b(r\zeta))\,dm(\zeta)=\infty.
\end{equation}
\end{lem}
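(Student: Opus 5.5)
The proof has three parts: verify the conformal map, compute the boundary formula, and show that \eqref{e:strip-divergence} holds because the distribution of $\Im F^*$ on $\T$ has exponential tails $\sim e^{-|x|}$.

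\textbf{Conformality.} The Möbius map $w=\frac{1+z}{1-z}$ sends $\D$ bijectively onto the right half-plane $\{\Re w>0\}$. On that half-plane the principal logarithm is a conformal bijection onto the horizontal strip $\{|\Im \lambda|<\pi/2\}$ and is real on $(0,\infty)$. Multiplying by $-i$ rotates this horizontal strip onto the vertical strip $S$. Hence $F$ maps $\D$ conformally onto $S$.

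\textbf{Boundary formula.} For $\zeta\in\T\setminus\{\pm1\}$, $F$ extends continuously to $\zeta$ with
\[
F^*(\zeta)=-i\log\frac{1+\zeta}{1-\zeta}.
\]
Since $\Im(-i\lambda)=-\Re\lambda$ and $\Re\log w=\log|w|$, we get
\[
\Im F^*(\zeta)=-\log\frac{|1+\zeta|}{|1-\zeta|},
\]
which is \eqref{e:boundary formula}.

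\textbf{Divergence: reduction to boundary values.} The function $\Phi_Y(F_b)=Y(\Im F+b)$ is the composition of the convex function $Y$ with the harmonic function $\Im F_b$, so it is subharmonic. Its circle means $\int_\T \Phi_Y(F_b(r\zeta))\,dm(\zeta)$ are therefore nondecreasing in $r$. Moreover $F_b(r\zeta)\to F_b^*(\zeta)$ for every $\zeta\neq\pm1$, and $Y$ is continuous, so Fatou's lemma gives
\[
\sup_r\int_\T\Phi_Y(F_b(r\zeta))\,dm(\zeta)\ \ge\ \int_\T Y\bigl(\Im F^*(\zeta)+b\bigr)\,dm(\zeta).
\]
It therefore suffices to show that this boundary integral diverges.

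\textbf{Divergence: the boundary integral.} Write $\zeta=e^{i\theta}$. Then
\[
\frac{|1-\zeta|}{|1+\zeta|}=\abs{\tan(\theta/2)},
\]
so $\Im F^*(e^{i\theta})=\log|\tan(\theta/2)|$. On $\theta\in(0,\pi)$ substitute $x=\log\tan(\theta/2)$, so $\theta=2\arctan e^x$ and
\[
d\theta=\frac{2e^x}{1+e^{2x}}\,dx=\frac{dx}{\cosh x}.
\]
With $dm=d\theta/(2\pi)$ this gives
\[
\int_\T Y(\Im F^*+b)\,dm \ \ge\ \frac{1}{2\pi}\int_{\R}\frac{Y(x+b)}{\cosh x}\,dx \ \ge\ \frac1{2\pi}\int_0^\infty Y(x+b)e^{-x}\,dx.
\]
Shifting $x\mapsto x-b$, the last integral is $e^{b}\int_b^\infty Y(u)e^{-u}\,du$. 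This is infinite by \eqref{e:Y-critical-growth}: if $b>0$, the omitted piece $\int_0^b Y(u)e^{-u}\,du$ is finite because a convex function is continuous, hence bounded on compacts; if $b\le 0$, the range of integration only grows and $Y\ge0$.

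\textbf{Main obstacle.} The only step that needs care is passing from the $\sup_r$ in \eqref{e:strip-divergence} to the boundary integral. The map $F$ is unbounded near $\pm1$, but only a lower bound on the circle means is needed, and Fatou's lemma applied to the pointwise limits on $\T\setminus\{\pm1\}$ supplies it. The subharmonicity/monotonicity remark is not even required for this direction.
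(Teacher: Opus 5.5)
Your proof is correct and follows essentially the same route as the paper: the same M\"obius--logarithm--rotation argument for conformality, the same substitution $x=\log\tan(\theta/2)$ with $d\theta=dx/\cosh x$, and Fatou's lemma to pass from the circle means to the boundary integral. The differences are cosmetic. You discard the half-circle $(\pi,2\pi)$ instead of using the symmetry $t\mapsto 2\pi-t$. You also handle the shift by $b$ through a case split on the sign of $b$, whereas the paper uses the bound $\cosh(\omega-b)\le e^{|b|}\cosh\omega$.
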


\begin{proof}
The M\"obius transformation $z\mapsto(1+z)/(1-z)$ maps $\D$ conformally onto the right half-plane. The logarithm maps the right half-plane conformally onto the horizontal strip $\{w:|\Im w|<\pi/2\}$, and multiplication by $-i$ rotates that strip onto $S$.  Thus $F$ maps $\D$ conformally onto $S$.

A direct calculation gives
$$
    \Im F(z)=\log\frac{|1-z|}{|1+z|}.
$$
Taking nontangential limits gives~\eqref{e:boundary formula}.

Write $\zeta=e^{it}$. Then
$$
    \Im F^*(e^{it})=\log\left\lvert\tan\frac t2\right\rvert.
$$
Since $Y\geq0$, $\cosh(\omega-b)\leq e^{|b|}\cosh \omega$ for $\omega\in\R$, and $1/\cosh \omega\geq e^{-\omega}$ for $\omega\geq0$, the change of variables $x=\log\tan(t/2)$ on $(0,\pi)$ together with the symmetry
$t\mapsto2\pi-t$ gives
\begin{align*}
    \int_{\T}Y\bigl(\Im F^*(\zeta)+b\bigr)\,dm(\zeta)
    &=\frac1\pi\int_{\R}\frac{Y(x+b)}{\cosh x}\,dx=\frac1\pi\int_{\R}\frac{Y(\omega)}{\cosh(\omega-b)}\,d\omega\\
    &\geq\frac{e^{-|b|}}{\pi}\int_{\R}\frac{Y(\omega)}{\cosh \omega}\,d\omega \geq\frac{e^{-|b|}}{\pi}\int_0^\infty Y(\omega)e^{-\omega}\,d\omega=\infty,
\end{align*}
by \eqref{e:Y-critical-growth}. Because $Y$ is continuous and nonnegative, Fatou's lemma implies
\begin{align*}
    \liminf_{r\to1^-} \int_{\T}\Phi_Y(F_b(r\zeta))\,dm(\zeta)
    &\geq \int_{\T}Y\bigl(\Im F^*(\zeta)+b\bigr)\,dm(\zeta)=\infty,
\end{align*}
which proves \eqref{e:strip-divergence}.
\end{proof}

We use the Herglotz $A$-integral representation in \cite[Theorem~5.4]{GMR} to prove that integrability of either the positive or the negative part of a conjugate function implies its full integrability.

\begin{lem}\label{lem:one-sided}
Let $u\in L^1(\T)$ be real-valued. If $(\widetilde u)^+\in L^1(\T)$ or $(\widetilde u)^-\in L^1(\T)$, then $\widetilde u\in L^1(\T)$.
\end{lem}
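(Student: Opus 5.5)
By symmetry (replace $u$ by $-u$, which changes $\widetilde u$ to $-\widetilde u$), it suffices to treat the case $(\widetilde u)^-\in L^1(\T)$, that is, $\widetilde u$ bounded below in the integrable sense. The plan is to show that the analytic function $G=P[u]+i\widetilde{P[u]}$ has a boundary function whose imaginary part is integrable. We will do this by building a positive harmonic function out of $\widetilde u$ and invoking the Herglotz $A$-integral representation from \cite[Theorem~5.4]{GMR}.

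\textbf{Step 1.} Consider the analytic function $H=-iG=\widetilde{P[u]}-iP[u]$ on $\D$. Its real part $v=\widetilde{P[u]}$ is the harmonic conjugate normalized by $v(0)=0$, and its boundary values are $\widetilde u$ a.e. Since $u\in L^1$, Kolmogorov's theorem places $\widetilde u$ in weak $L^1$. More precisely, $\widetilde u\in L^{1,\infty}_0$ and $G\in H^{1,\infty}_0\subset N^+$. This is where the classes introduced in the preliminaries enter.

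\textbf{Step 2.} Write $\widetilde u=(\widetilde u)^+-(\widetilde u)^-$ with $(\widetilde u)^-\in L^1$. Let $w=P[(\widetilde u)^-]$ and set $h=v+w$. Then $h$ is harmonic in $\D$ with nontangential boundary values $(\widetilde u)^+\ge 0$ a.e. The key point is to show that $h$ is the real part of an analytic function in $N^+$ (equivalently, in $H^{1,\infty}_0$) with nonnegative boundary real part. The Herglotz $A$-integral representation \cite[Theorem~5.4]{GMR} states that such a function (for instance $v$, whose conjugate data lies in $L^1$) is recovered from its boundary values by the Herglotz kernel in the $A$-integral sense:
\[
v(z)+i\,\cdot=(A)\!\int_\T \frac{\zeta+z}{\zeta-z}\,\widetilde u(\zeta)\,dm(\zeta).
\]

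\textbf{Step 3.} Take real parts at $z=0$ and split the $A$-integral. The part involving $(\widetilde u)^-$ is a genuine Lebesgue integral, and the $A$-integral is additive. This gives
\[
0=v(0)=(A)\!\int_\T (\widetilde u)^+\,dm-\int_\T(\widetilde u)^-\,dm.
\]
For the nonnegative function $(\widetilde u)^+\in L^{1,\infty}_0$, the $A$-integral is the limit of the truncated integrals $\int_{\{(\widetilde u)^+\le t\}}(\widetilde u)^+\,dm$, since the tail term $t\,m\{(\widetilde u)^+>t\}$ tends to $0$. By monotone convergence this limit equals the Lebesgue integral $\int(\widetilde u)^+\,dm$. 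Hence $\int(\widetilde u)^+\,dm=\int(\widetilde u)^-\,dm<\infty$, and so $\widetilde u\in L^1(\T)$.

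\textbf{Main obstacle.} The delicate point is verifying the hypotheses of \cite[Theorem~5.4]{GMR} for $G$ (or $iG$) and showing that its $A$-integral formula applies to the real part $\widetilde u$, not to $u$. This requires $G\in N^+$ with boundary function in $L^{1,\infty}_0$, which follows from Kolmogorov's weak-type estimate in its small-$o$ form together with the standard fact that $P[u]+i\widetilde{P[u]}$ lies in $H^p$ for $p<1$, hence in $N^+$. Once that is in place, the rest is the monotone-convergence identification of the $A$-integral of a nonnegative function with its Lebesgue integral.
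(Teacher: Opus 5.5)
Your argument is essentially the paper's proof: show that the analytic function with boundary values $u+i\widetilde u$ (suitably normalized at the origin) lies in $H^{1,\infty}_0$ via the small-$o$ Kolmogorov estimate and $H^p\subset N^+$ for $p<1$, apply the Herglotz $A$-integral representation of \cite[Theorem~5.4]{GMR}, evaluate at the origin to get $\lim_{t\to\infty}\int_\T\bigl(\min\{(\widetilde u)^+,t\}-\min\{(\widetilde u)^-,t\}\bigr)\,dm=0$, and conclude by monotone convergence. Your Step~2 plays no role and can be deleted; in particular, the function $h=v+P[(\widetilde u)^-]$ is never used, and the phrase ``$N^+$, equivalently $H^{1,\infty}_0$'' is inaccurate. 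The ``additivity'' of the $A$-integral that you invoke does not hold in general, but it is immediate here because $(\widetilde u)^+$ and $(\widetilde u)^-$ have disjoint supports, so the truncations split pointwise.
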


\begin{proof}
Write $v=\widetilde u$ and define
$$
    v_A=\max\{-A,\min\{v,A\}\},
$$
where $A>0$. For completeness, we first derive the following sharpened form of Kolmogorov's weak-type estimate (which is stated in Section~5.1 of \cite{GMR}, although the reference given there is to the classical estimate):
\begin{equation}\label{e:one-sided}
	\lim_{A\to \infty} A\,m(\{\xi\in \T : |v(\xi)|>A\}) = 0.
\end{equation}
Indeed, let $C>0$ be a constant such that Kolmogorov's weak-type $(1,1)$ estimate
$$
    m\bigl\{|\widetilde w|>\lambda\bigr\}\leq \frac{C\|w\|_{L^1(\mathbb T)}}{\lambda}
$$
holds for every real-valued $w\in L^1(\mathbb T)$ and every $\lambda>0$. Given $\varepsilon>0$, choose a real trigonometric polynomial $p$ such that $\|u-p\|_{L^1}<\epsilon$.  Since $\widetilde p$ is bounded, for $A>2\|\widetilde p\|_\infty$,
$$
    \{|v|>A\}\subset\left\{|\widetilde{u-p}|>\frac A2\right\},
$$
and so
$$
    A\,m\{|v|>A\}\leq 2C\|u-p\|_{L^1}<2C\epsilon,
$$
which proves \eqref{e:one-sided}.

Since \cite[Theorem~5.4]{GMR} requires the real part of the analytic function to vanish at the origin, set
$$
    u_0=u-\int_{\T}u\,dm
$$
and define the Herglotz integral
$$
    \Herglotz_{u_0}(z)=\int_{\T}\frac{\xi+z}{\xi-z}\,u_0(\xi)\,dm(\xi), \qquad z\in\D.
$$
Write $H=U+iV$, where $U=P[u_0]$ and $V$ is the harmonic conjugate of $U$ normalized by $V(0)=0$. Since $m$ is normalized, $U(0)=\Re H(0)=\int_{\T}u_0\,dm=0$. We still need to verify that $H\in H^{1,\infty}_0$. Since $U=P[u_0]\in h^1$, Kolmogorov's theorem for conjugate harmonic functions implies that $V\in h^p$ for every $0<p<1$. Moreover, $U\in h^p$ for every $0<p<1$, and
$$
    |H|^p\leq (|U|+|V|)^p\leq |U|^p+|V|^p.
$$
Hence $H=U+iV\in H^p$ for every $0<p<1$, and therefore $H\in N^+$.

Notice that $H^*=u_0+iv$ a.e.\ on $\T$, where $v=\widetilde u=\widetilde{u_0}$. To show that
$H^*\in L^{1,\infty}_0$, observe that
$$
    \{|H^*|>A\}\subset\{|u_0|>A/2\}\cup\{|v|>A/2\}.
$$
Since $u_0\in L^1(\T)$,
$$
    A\,m\{|u_0|>A/2\}\leq2\int_{\{|u_0|>A/2\}}|u_0|\,dm\to 0.
$$
Further, by \eqref{e:one-sided},
$$
    A\,m\{|v|>A/2\}=2\left(\frac A2\right)m\{|v|>A/2\}\to 0.
$$
Thus, $A\,m\{|H^*|>A\}\to 0$ and so $H^*\in L^{1,\infty}_0$. Since $H\in N^+$, it follows that
$H\in H^{1,\infty}_0$. Together with $U(0)=0$, this verifies all the hypotheses of \cite[Theorem~5.4]{GMR}, which gives
$$
    H(z)=\lim_{A\to\infty} i\int_{\T}\frac{\xi+z}{\xi-z}\,v_A(\xi)\,dm(\xi)
$$
uniformly on compact subsets of $\D$. Now
\begin{align*}
    0 &= H(0) = \lim_{A\to \infty} i\int_{\T}v_A\,dm\\
	&=\lim_{A\to \infty} i \left( \int_{\T}\min\{v^+,A\}\,dm-\int_{\T}\min\{v^-,A\}\,dm\right).
\end{align*}
Suppose first that $v^+\in L^1(\T)$. Since $\min\{v^+,A\}\to v^+$ as $A\to\infty$, monotone
convergence shows that
$$
    \int_{\T}\min\{v^+,A\}\,dm\to\int_{\T}v^+\,dm<\infty
$$
and so
$$
    \lim_{A\to\infty}\int_{\T}\min\{v^-,A\}\,dm=\int_{\T}v^+\,dm<\infty.
$$
Since $\min\{v^-,A\}\to v^-$, another application of monotone convergence implies that $v^-\in L^1(\T)$. Thus, $v\in L^1(\T)$. The argument with the signs reversed proves the other case.
\end{proof}

\begin{thm}\label{thm:global}
Let $Y:\R\to[0,\infty)$ be convex and suppose that
$$
    \int_0^\infty Y(x)e^{-x}\,dx=\infty.
$$
If $f\in L^1(\T)$ satisfies $0<|f^{-1}([\pi/2,\infty[)|<2\pi$ and $f^{-1}(]-\pi/2,\pi/2[)$ has measure zero, then
$$
    Y\bigl(\tilde{f}\bigr)\notin L^1(\T).
$$
\end{thm}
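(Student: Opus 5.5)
The plan is to argue by contradiction: assume $Y(\tilde f)\in L^1(\T)$, and use Theorem~\ref{thm:burkholder} to compare the analytic completion of $f$ with the strip map of Lemma~\ref{lem:strip}, whose $\Phi_Y$-means blow up. Let
$$
G(z)=\int_\T \frac{\xi+z}{\xi-z}f(\xi)\,dm(\xi)=P[f](z)+iV(z),
$$
so that $G^*=f+i\tilde f$ a.e. on $\T$. By hypothesis $|f|\ge\pi/2$ a.e., so $G^*(\zeta)\in\C\setminus S$ for a.e.\ $\zeta$. This gives the boundary condition of Theorem~\ref{thm:burkholder} with $\Omega=S$ and $\Phi=\Phi_Y$. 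The function $\Phi_Y(z)=Y(\Im z)$ is subharmonic because $Y$ is convex.

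\textbf{Step 1 (integrability of $\tilde f$ and a harmonic majorant).} We need $\Phi_Y\circ G$ to have bounded circle means.
\begin{itemize}
\item Since $Y\ge0$ is convex and $\int_0^\infty Y(x)e^{-x}\,dx=\infty$, $Y$ is unbounded as $x\to+\infty$. Convexity then gives constants $\alpha>0$ and $\beta$ with $Y(x)\ge \alpha x-\beta$ for all $x$.
\item Hence $Y(\tilde f)\in L^1$ implies $(\tilde f)^+\in L^1$, and Lemma~\ref{lem:one-sided} yields $\tilde f\in L^1(\T)$.
\item Consequently $V=P[\tilde f]$. Jensen's inequality applied to the probability measure $P_z\,dm$ gives $Y(V(z))\le P[Y(\tilde f)](z)$.
\item So $\Phi_Y\circ G$ has a harmonic majorant, and $\sup_r\int_\T\Phi_Y(G(r\zeta))\,dm$ is finite.
\end{itemize}
The same holds for $G\circ\varphi$ whenever $\varphi$ is a disc automorphism. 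The reason is that finiteness of $\sup_r\int_\T \Phi(h(r\zeta))\,dm(\zeta)$ for subharmonic $\Phi\circ h\geq 0$ is equivalent to the existence of a harmonic majorant, and that property is Möbius invariant.

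\textbf{Step 2 (normalizing the base point; the expected obstacle).} Theorem~\ref{thm:burkholder} needs $F(0)=G(0)\in S$. However, $G(0)=\int_\T f\,dm$ need not lie in $S$, and this is the main point to handle. I will therefore move the base point rather than shift $f$.
\begin{itemize}
\item Since $0<|f^{-1}([\pi/2,\infty[)|<2\pi$, both $E=f^{-1}([\pi/2,\infty[)$ and its complement, on which $f\le-\pi/2$, have positive measure.
\item By Fatou's theorem, $P[f]$ has radial limits $\ge\pi/2$ at a.e.\ point of $E$ and $\le-\pi/2$ at a.e.\ point of the complement.
\item So the continuous function $P[f]$ on the connected set $\D$ takes values above and below $0$, and there is $w\in\D$ with $P[f](w)=0$.
\item Let $\varphi_w$ be a disc automorphism with $\varphi_w(0)=w$, and put $\mathcal G=G\circ\varphi_w$. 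Then $\mathcal G(0)=iV(w)$, $\mathcal G$ still has boundary values in $\C\setminus S$ a.e., and by Step~1 its $\Phi_Y$-means are bounded.
\end{itemize}

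\textbf{Step 3 (conclusion).} Take $b=V(w)$ and $F_b=F+ib$ from Lemma~\ref{lem:strip}.
\begin{itemize}
\item $F_b(\D)=S+ib=S$, since $S$ is invariant under vertical translation.
\item $F_b(0)=ib=\mathcal G(0)$.
\end{itemize}
Theorem~\ref{thm:burkholder} then gives
$$
\infty=\sup_{r}\int_\T\Phi_Y(F_b(r\zeta))\,dm\le \sup_r\int_\T \Phi_Y(\mathcal G(r\zeta))\,dm<\infty,
$$
where the equality on the left is \eqref{e:strip-divergence}. This contradiction proves $Y(\tilde f)\notin L^1(\T)$.

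The step I expect to need the most care is Step~2, together with the Möbius invariance used in Step~1. The intermediate-value argument depends on the nontangential boundary behaviour of $P[f]$ for merely integrable $f$. The transfer of bounded means through $\varphi_w$ I would justify via least harmonic majorants; an alternative is to bound the Jacobian of $\varphi_w$ on $\T$ directly.
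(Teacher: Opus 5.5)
Your proposal is correct and follows essentially the same route as the paper. The shared steps are Lemma~\ref{lem:one-sided} to upgrade $(\tilde f)^+\in L^1$ to $\tilde f\in L^1$, Jensen's inequality for bounded $\Phi_Y$-means, a zero $w$ of $P[f]$ with a disc automorphism to put the base point on the imaginary axis, and Theorem~\ref{thm:burkholder} applied against $F_b$ from Lemma~\ref{lem:strip}. The differences are minor: you transfer the bound through $\varphi_w$ using the explicit harmonic majorant $P[Y(\tilde f)]\circ\varphi_w$, whose circle means equal $P[Y(\tilde f)](w)$, whereas the paper proves $G\circ\sigma\in H^1$ via $N^+$ and Smirnov's theorem and then bounds the Jacobian of $\sigma^{-1}$ on $\T$. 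Your step $V=P[\tilde f]$ is the classical fact that $f,\tilde f\in L^1$ forces $G\in H^1$, which you should cite explicitly (for instance via $G\in N^+$ and Smirnov's theorem, as the paper does for $G\circ\sigma$).
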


\begin{proof}
Set $\Phi_Y(z)=Y(\Im z)$, which is clearly nonnegative.  Since $Y$ is convex, $\Phi_Y$ is subharmonic on $\C$ by Jensen's inequality on circles. Define
$$
    G(z)=\int_{\T}\frac{\xi+z}{\xi-z}\,f(\xi)\,dm(\xi),\qquad z\in\D.
$$
Then $\Re G=P[f]$, $\Im G(0)=0$, and $G^*=f+i\widetilde f$ a.e. on $\T$. Thus,
\begin{equation}\label{e:leaves}
    G^{*}(\zeta)\in\C\setminus S\qquad\text{for a.e.\ }\zeta\in\T .
\end{equation}
Choose Lebesgue points $\zeta_+\in f^{-1}([\pi/2,\infty[)$ and $\zeta_-\in f^{-1}(]-\infty,-\pi/2])$ of $f$ such that $f(\zeta_+)\geq\pi/2$ and $f(\zeta_-)\leq-\pi/2$. Since $P[f](r\zeta_\pm)\to f(\zeta_\pm)$ as $r\to1^-$, the harmonic function $\Re G=P[f]$ has a positive and a negative value in $\D$. By continuity, there is a $z_0\in\D$ such that $\Re G(z_0)=0$.

Let $\sigma$ be an automorphism of $\D$ with $\sigma(0)=z_0$, and put $G_\sigma=G\circ\sigma$. By the same Kolmogorov argument used above, $G\in H^p$ for every $0<p<1$, and hence $G\in N^+$, and so also $G_\sigma\in N^+$ because $N^+$ is invariant under disk automorphisms. Since $G_\sigma^*=G^*\circ\sigma$ a.e.\ on $\T$, by \eqref{e:leaves},
$$
    G_\sigma^*(\zeta)\in\C\setminus S\qquad\text{for a.e. }\zeta\in\T.
$$

Suppose that
\begin{equation}\label{e:contradiction-assumption}
    \int_{\T}Y(\widetilde f)\,dm<\infty.
\end{equation}
The growth condition \eqref{e:Y-critical-growth} implies that $Y$ is unbounded above on $[0,\infty)$. Choose $x_1>0$ such that $Y(x_1)>Y(0)$, and set
$$
    a=\frac{Y(x_1)-Y(0)}{x_1}>0.
$$
By convexity, $Y(x)\geq Y(x_1)+a(x-x_1)$ for $x\geq x_1$, and so, if $x\geq x_1$, then
$$
    x\leq a^{-1}\bigl(Y(x)-Y(x_1)+ax_1\bigr) \leq a^{-1}\bigl(Y(x)+ax_1\bigr).
$$
For $0\leq x\leq x_1$, we have $x^+\leq x_1\leq x_1(1+Y(x))$ since $Y\geq0$, while $x^+=0$ for
$x<0$. Thus, there is a constant $C>0$ such that
$$
    x^+\leq C\bigl(1+Y(x)\bigr)
$$
for $x\in\R$. It follows from \eqref{e:contradiction-assumption} that $(\widetilde f)^+\in L^1(\T)$, and so, by Lemma~\ref{lem:one-sided}, we have $\widetilde f\in L^1(\T)$.

Let $h=\Im G_\sigma^*=\widetilde f\circ\sigma$. Since $f,\widetilde f\in L^1(\T)$ and $|(\sigma^{-1})'|$ is bounded on $\T$, a change of variables shows that $f\circ\sigma,h\in L^1(\T)$. Hence
$$
    G_\sigma^*=(f+i\widetilde f)\circ\sigma\in L^1(\T).
$$
Since $G_\sigma\in N^+$, Smirnov's theorem implies that $G_\sigma\in H^1$. Therefore
$$
    \Im G_\sigma(re^{it})=P_r*h(e^{it}), \qquad 0<r<1.
$$
Moreover, since $|(\sigma^{-1})'|$ is bounded on $\T$,
$$
    \int_{\T}Y(h)\,dm =\int_{\T} Y(\widetilde f(\eta)) |(\sigma^{-1})'(\eta)|\,dm(\eta) <\infty
$$
by \eqref{e:contradiction-assumption}. Jensen's inequality for the convex function $Y$ shows that
$$
    Y\bigl(\Im G_\sigma(re^{it})\bigr) =Y\bigl(P_r*h(e^{it})\bigr) \leq P_r*(Y\circ h)(e^{it}),
$$
and so
\begin{equation}\label{e1:global}
    \sup_{0<r<1} \int_{\mathbb T} \Phi_Y\bigl(G_\sigma(r\zeta)\bigr)\,dm(\zeta)
    \le\int_{\mathbb T}Y(h)\,dm
    <\infty.
\end{equation}

Since $\Re G_\sigma(0)=0$, we can write $G_\sigma(0)=ib$ for some $b\in\mathbb R$. Since $F(0)=0$, the function $F_b=F+ib$ satisfies $F_b(0)=G_\sigma(0)$ and $F_b(\D)=S$. By Lemma~\ref{lem:strip} and Theorem~\ref{thm:burkholder},
$$
    \infty = \sup_{0<r<1}\int_{\T}\Phi_Y\bigl(F_b(r\zeta)\bigr)\,dm(\zeta)
    \leq\sup_{0<r<1}\int_{\T}\Phi_Y\bigl(G_\sigma(r\zeta)\bigr)\,dm(\zeta),
$$
which contradicts \eqref{e1:global}.
\end{proof}

The global theorem has the following Hardy-space consequence, which extends \cite[Example~1]{Gissy-2021} from bounded to integrable symbols.

\begin{cor}\label{cor:outer}
Let $u\in L^1(\T)$ be real valued. Suppose that there are real numbers
$a<b$ such that
$$
    u\in(-\infty,a]\cup[b,\infty)\qquad\text{a.e. on }\T,
$$
and that the two sets $\{\xi\in\T : u(\xi)\leq a\}$ and $\{\xi\in\T : u(\xi)\geq b\}$ have positive measure. Let $\delta=b-a$ and define
$$
    \Phi_u(z)=\exp\left\{
    \frac{i}{2}\int_{\T}\frac{\zeta+z}{\zeta-z}\,
    u(\zeta)\,dm(\zeta)\right\},
    \qquad z\in\D.
$$
Then
\begin{equation}\label{e:outer}
    \Phi_u,\ \Phi_u^{-1}\notin H^p \quad\text{whenever}\
    p\geq\frac{2\pi}{\delta}.
\end{equation}
If, in addition, $\tilde u\in L^1(\T)$, then $\Phi_u$ and $\Phi_u^{-1}$ are outer functions.
\end{cor}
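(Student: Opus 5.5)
The plan is to reduce to Theorem~\ref{thm:global} by an affine change of the symbol. We compute $|\Phi_u|$ on the boundary. Since the Herglotz integral $K_u=\int_\T\frac{\zeta+z}{\zeta-z}u\,dm$ has boundary values $u+i\tilde u$ a.e., we get
\[
|\Phi_u^*|=\exp\bigl(\Re(\tfrac i2(u+i\tilde u))\bigr)=\exp(-\tilde u/2).
\]
Hence $\Phi_u\in H^p$ would force $\exp(-p\tilde u/2)\in L^1(\T)$, because $H^p$ functions satisfy $\int|F^*|^p\,dm\le\sup_r\int|F(r\zeta)|^p\,dm$ by Fatou. Similarly $\Phi_u^{-1}\in H^p$ forces $\exp(p\tilde u/2)\in L^1$. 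So it suffices to show that $\exp(\pm p\tilde u/2)\notin L^1(\T)$ for $p\ge 2\pi/\delta$.

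Next we normalize the symbol. Put $c=(a+b)/2$ and $f=\pm\frac{\pi}{\delta}(u-c)$. Then $f\in L^1(\T)$ is real valued, $f\in(-\infty,-\pi/2]\cup[\pi/2,\infty)$ a.e., so $f^{-1}(]-\pi/2,\pi/2[)$ is null. Both of the sets $\{u\le a\}$ and $\{u\ge b\}$ have positive measure, so $0<|f^{-1}([\pi/2,\infty[)|<2\pi$. Since conjugation kills constants and is linear, $\tilde f=\pm\frac\pi\delta\tilde u$. Writing $p\tilde u/2=\frac{p\delta}{2\pi}\cdot\frac{\pi}{\delta}\tilde u$, set $\lambda=p\delta/(2\pi)\ge1$ and $Y(x)=e^{\lambda x}$. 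This $Y$ is nonnegative and convex, and $\int_0^\infty e^{(\lambda-1)x}\,dx=\infty$ since $\lambda\ge1$. Theorem~\ref{thm:global}, applied with the sign choice matching $\Phi_u$ or $\Phi_u^{-1}$, gives $Y(\tilde f)\notin L^1$. That is, $\exp(\mp p\tilde u/2)\notin L^1$, contradicting the previous paragraph, and \eqref{e:outer} follows.

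For the outer statement, assume $\tilde u\in L^1$. Then $\Phi_u=\exp(\frac i2 K_u)$ and $-i\Phi_u$'s logarithm is $\frac i2K_u=\frac i2(U+iV)$. The modulus satisfies $\log|\Phi_u(z)|=-\frac12 V(z)$ with $V=\Im K_u$. Since $u,\tilde u\in L^1$, $K_u\in N^+$ with $L^1$ boundary values, so $K_u\in H^1$ by Smirnov's theorem (the argument of Theorem~\ref{thm:global}). Hence $V=P[\tilde u]$ and $\Im K_u(0)=0$, so $iK_u=\int\frac{\zeta+z}{\zeta-z}(-\tilde u)\,dm$. Therefore
\[
\Phi_u(z)=\exp\Bigl(\int_\T\frac{\zeta+z}{\zeta-z}\bigl(-\tfrac12\tilde u(\zeta)\bigr)\,dm(\zeta)\Bigr),
\]
which is outer with $h=-\tilde u/2\in L^1$ and $\alpha=1$. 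The same holds for $\Phi_u^{-1}$ with $h=\tilde u/2$.

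The main obstacle is the identification $iK_u=K_{-\tilde u}$, i.e.\ that the harmonic function $V$ is the Poisson integral of its boundary values. This needs $K_u\in H^1$, which requires both $u$ and $\tilde u$ integrable plus Smirnov's theorem; the rest is bookkeeping of constants and signs.
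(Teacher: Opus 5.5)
Your argument follows the paper's proof. You normalize $u$ affinely to $\pm\frac{\pi}{\delta}\bigl(u-\frac{a+b}{2}\bigr)$, apply Theorem~\ref{thm:global} with an exponential $Y$, and use $|\Phi_u^*|=e^{-\tilde u/2}$ together with Fatou. The only cosmetic difference is that you take $Y(x)=e^{\lambda x}$ with $\lambda=p\delta/(2\pi)\geq 1$ separately for each $p$. The paper instead uses $Y(x)=e^x$ to exclude $H^{2\pi/\delta}$ and then the inclusion $H^p\subset H^{2\pi/\delta}$ for $p\geq 2\pi/\delta$. Your explicit justification that $V=P[\tilde u]$, namely $K_u\in H^1$ by Smirnov's theorem, is a step the paper uses without comment.

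One step is wrong as written, though it does not affect the conclusion. From $\Im K_u(0)=0$ you deduce $iK_u=\int_\T\frac{\zeta+z}{\zeta-z}(-\tilde u)\,dm$. But the two functions $iK_u$ and $K_{-\tilde u}$ only share the real part $-V$, so they differ by an imaginary constant. Evaluating at $0$, and using $K_{-\tilde u}(0)=-\int_\T\tilde u\,dm=-V(0)=0$, this constant is $i\,\Re K_u(0)=i\int_\T u\,dm$, which need not vanish. The correct identity is therefore $iK_u=K_{-\tilde u}+i\int_\T u\,dm$, and hence
\[
\Phi_u=e^{\frac{i}{2}\int_\T u\,dm}\exp\bigl(K_{-\tilde u/2}\bigr).
\]
So $\alpha=e^{\frac i2\int_\T u\,dm}$ rather than $\alpha=1$; this is exactly the factor $e^{iu_0/2}$ in the paper. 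Likewise $\Phi_u^{-1}$ has $\alpha=e^{-\frac i2\int_\T u\,dm}$. Since the definition of an outer function allows any unimodular $\alpha$, both functions are still outer once you correct the constant. Also, the phrase about the logarithm of $-i\Phi_u$ should simply read $\log\Phi_u=\frac{i}{2}K_u$.
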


\begin{proof} 
Define $v=\tfrac\pi\delta(u-\tfrac12(a+b))$. Then 
$$
|v|\geq \frac{\pi}{2}\quad\text{a.e.} \quad \text{ and }  \quad 0<\abs{ \Bigl\{\zeta\in\T : v(\zeta)\geq \frac{\pi}{2} \Bigr\} } <2\pi.
$$

Applying Theorem~\ref{thm:global} with $Y(x)=e^x$ implies that $e^{\widetilde v}\notin L^1(\T)$ and $e^{-\widetilde v}\notin L^1(\T)$, the latter by using $-v$ instead of $v$. Thus, 
$$
    |\Phi_u^*|^{2\pi/\delta}=\exp\left(-\frac{\pi}{\delta}\widetilde u\right) \notin L^1(\T)\quad and\quad
    |(\Phi_u^{-1})^*|^{2\pi/\delta}
    =
    \exp\left(\frac{\pi}{\delta}\widetilde u\right)
    \notin L^1(\T),
$$
and so $\Phi_u,\Phi_u^{-1}\notin H^{2\pi/\delta}$. Since
$H^p\subset H^{q}$ whenever $p\geq q$, we obtain~\eqref{e:outer}.

Finally, suppose that $\widetilde u\in L^1(\T)$ and write
$$
    \Herglotz_u(z)=\int_{\T}\frac{\zeta+z}{\zeta-z}u(\zeta)\,dm(\zeta)
$$
for the Herglotz integral as in the proof of Lemma \ref{lem:one-sided}. Then
$$
     \Herglotz_{-\widetilde u/2}=\frac{i}{2}\Herglotz_u-\frac{iu_0}{2}, \quad \text{where}\  u_0=\int_{\T}u\,dm,
$$
and so
$$
    \Phi_u=e^{iu_0/2}
    \exp\left\{\int_{\T}\frac{\zeta+z}{\zeta-z}
    \left(-\frac12\widetilde u(\zeta)\right)\,dm(\zeta)\right\}
$$
is an outer function.  The same argument, with $\widetilde u/2$ in place of $-\widetilde u/2$, shows that $\Phi_u^{-1}$ is outer.
\end{proof}

Theorem \ref{thm:global} not only yields non-exponential integrability, but we can improve precision by choosing $Y$ with a strictly lower growth rate than $x\mapsto e^x$, that is, $\lim_{x\to\infty} Y(x)e^{-x} = 0$. More precisely, we have the following remark followed by an example.

\begin{rem}\label{rem:CompareWithEarlierResult}

In \cite[Theorem 1]{Gissy-2021} a precise lower bound on the growth rate was calculated for the distribution 
\[
D_{\tilde{f}}(x) = m\{ t\in\T : \tilde{f}(t) > x \}.
\]
The authors proved that given $f\in L^\infty $ with $f\geq \pi/2$ a.e. and $0<\abs{E}<2\pi$, it holds that $D_{ \COCO{(2\chi_E-1)f}  }(x) \gtrsim e^{-x}, \, x>0$. It follows that if $Y\in C^1(\mathbb R)$ is nonnegative and strictly increasing on $]0,\infty[$, 
\[
\int_{\T} Y\bigl((\COCO{(2\chi_E-1)f})^+(t)\bigr) - Y(0) \, dm(t)  = \int_0^\infty D_{ \COCO{(2\chi_E-1)f}  }(x) Y'(x) \, dx \gtrsim  \int_0^\infty e^{-x} Y'(x) \, dx. 
\]
Hence, using the precise uniform lower bound of the distributional growth, we obtain non-$Y$ integrability of the conjugate function, whenever $ \int_0^\infty e^{-x} Y'(x) \, dx = \infty$.
Assume that $Y\in C^1(]0,\infty[)$ is nonnegative, strictly increasing and convex on $]0,\infty[$. By integration by parts, we obtain
\[
\int_0^\infty e^{-x} Y'(x) \, dx = \infty \quad \Longleftrightarrow  \quad \int_0^\infty e^{-x} Y(x) \, dx = \infty, 
\]
proving that for this class of $Y$, Theorem \ref{thm:global} describes the non integrability as good as the uniform lower bound for the distribution function would. This could be explained by the fact that $g:=(\pi/2)(2\chi_E-1)$, where $E$ is an arc, is a function proving that the lower bound of the distributional growth cannot be larger than $e^{-x}$. The harmonic conjugate of $g$ have logarithmic singularities, which can be compared to the function $F$ in Lemma \ref{lem:strip}, which is the function we used for the lower bound, see the end of the proof to Theorem \ref{thm:global}.

The advantage of Theorem \ref{thm:global} compared to \cite[Theorem 1]{Gissy-2021} is that we allow $f\in L^1$, not only $f\in L^\infty$. The proof of \cite[Theorem 1]{Gissy-2021} cannot trivially be generalized to include unbounded functions, since it relies on the relation between $\norm{\cdot}_\infty$ and point-wise estimates, which is unique for $\norm{\cdot}_\infty$.

\end{rem}

\begin{exam}\label{ex:BetterThanExp}

Let $Y(x)=(e^x/(1+x)-1)\chi_{[0,\infty[}(x)$. If $E\subset\T$ is measurable with $0<|E|<2\pi$ and $f\in L^1(\T)$ satisfies $f\geq\pi/2$ a.e., then
$$
    Y\bigl(\COCO{(2\chi_E-1)f}\bigr)\notin L^1(\T).
$$

The only thing we need to verify is that $Y$ is convex, since $\int_0^\infty (1+x)^{-1}  \, dx =\infty$.
Differentiating, we obtain for $x\neq 0$,
\[
Y'(x) = \begin{cases}
\frac{  xe^x }{(1+x)^2}, & x>0; \\
0, &x<0;
\end{cases} \quad \text{ and } \quad Y''(x) = \begin{cases}
\frac{  (x^2+1)e^x }{(1+x)^3}, & x>0; \\
0, &x<0.
\end{cases}  
\]
Since both $Y$ and $Y'$ are continuous on $\mathbb R$, $Y$ is increasing and convex on $\mathbb R$.

\end{exam}

For the subclass $f\in L^\infty$, Example \ref{ex:BetterThanExp} follows from the distribution estimate in \cite[Theorem 1]{Gissy-2021}. A function in $Y\in C^\infty(\mathbb R)$ with similar growth is given by $Y(x) = e^x(\pi/2 - \arctan(x/2))$, but since proving it is convex is quite tedious and relies on basic calculus, we leave out the details. This function can in fact be analytically extended to $\C\setminus \bigcup \pm i[2,\infty[$.

\section{Local results}\label{sec:local}

The Hilbert transform is defined for $f\in L^1(\mathbb R)$ as
\[
(Hf)(x) = \frac{1}{\pi} \lim_{\epsilon\to 0} \int_{\mathbb R\setminus ]x-\epsilon,x+\epsilon[}  f(t) \frac{1}{t-x} \, dt = \frac{1}{\pi} \lim_{\epsilon\to 0} \int_{\mathbb R\setminus ]-\epsilon,\epsilon[}  f(t+x) \, \frac{dt}{t}, 
\]
where the limit exists for almost every $x\in \mathbb R$. For $0<\epsilon < \delta$ define the bounded operator $L^1(\R) \to \R$
\[
H_{\epsilon}^{(\delta)}f := \int_{]-\delta, \delta[\setminus [-\epsilon,\epsilon] } f(t)  \, \frac{ dt}{t} = \int_{\epsilon}^\delta  \big( f(t) - f(-t) \big) \, \frac{dt}{t}.
\]
and 
\[
(Mf)(s) := \frac{1}{s} \int_0^{s} \big( f(t) - f(-t) \big) \,dt, \quad  s\neq 0.
\]
Notice that $Mf$ is odd. Indeed, substituting $t\mapsto-t$, we have
\[
(Mf)(-s) = \frac{1}{-s} \int_0^{-s} \big( f(t) - f(-t) \big) \,dt \stackrel{ t\mapsto -t }{=} \frac{1}{s} \int_0^{s} \big( f(-t) - f(t) \big) \,dt = - (Mf)(s).
\]
Therefore,
\[
H_{\epsilon}^{(\delta)}(Mf) =  2\int_{\epsilon}^{\delta} (Mf)(t) \, \frac{ dt}{t} .
\]
By Fubini's theorem, we have
\begin{equation}\label{eq:eqForDeltaHilbertTransform}
\frac{1}{2}H_{\epsilon}^{(\delta)}(Mf)  = H_{\epsilon}^{(\delta)}f + (Mf)(\epsilon) - (Mf)(\delta).
\end{equation}
Indeed,
\begin{align*}
\frac{1}{2}H_{\epsilon}^{(\delta)}(Mf) &=  \int_{\epsilon}^{\delta} \frac{1}{t} \int_0^t \big( f(s) - f(-s) \big) \,ds \, \frac{dt}{t}  = \int_0^{\delta}  \int_0^{\delta}  \chi_{\epsilon<t<\delta} \chi_{0<s<t}  \big( f(s) - f(-s) \big) \,  \frac{dt}{t^2}   \,  ds \\
&=  \int_0^{\delta}  \int_{\max\{\epsilon,s\}}^{\delta}  \,  \frac{dt}{t^2} \, \big( f(s) - f(-s) \big) \,  ds =  \int_0^{\delta} \bigg(\frac{1}{\max\{\epsilon,s\}} - \frac{1}{\delta}\bigg)   \big( f(s) - f(-s) \big) \,  ds \\
&=\int_\epsilon^{\delta} \frac{1}{\max\{\epsilon,s\}}   \big( f(s) - f(-s) \big) \,  ds  + \int_0^{\epsilon} \frac{1}{\max\{\epsilon,s\}}   \big( f(s) - f(-s) \big) \,  ds  -(Mf)(\delta) \\
&=H_{\epsilon}^{(\delta)} (f) + (Mf)(\epsilon) - (Mf)(\delta).
\end{align*}

\bigskip

The next lemma shows the ($T$-)integrability of the Hilbert-transform of a function $f$ is not affected by scaling and translations. We will apply it to $T(x) = Y(x)$, which is a generalization of the exponential function.

\begin{lem}\label{HilbertTransformAndTranslations}
Let $I\subset \mathbb R$, $\psi(x) = ax+b, a>0,\, b\in\mathbb R$, let $T\colon \mathbb R \to \mathbb R$ be a measurable function, and let $f\in\mathbb L^1(\mathbb R)$. Then
\[
T\big( Hf \big) \in L^1(I) \ \Longleftrightarrow \ T\big( H(f\circ \psi) \big) \in L^1(\psi^{-1}(I)).
\]

\end{lem}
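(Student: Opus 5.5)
The plan is to prove the pointwise identity
\[
H(f\circ\psi)(x) = (Hf)(\psi(x)) \quad\text{for a.e. } x\in\mathbb R,
\]
and then conclude by a change of variables. First, $f\circ\psi\in L^1(\mathbb R)$, since $\int_{\mathbb R}\abs{f(ax+b)}\,dx = a^{-1}\norm{f}_{L^1}$. Hence $H(f\circ\psi)$ is defined almost everywhere.

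For the identity, fix $x$ and write the truncated integral in the second form of the definition:
\[
\frac1\pi\int_{\mathbb R\setminus ]-\epsilon,\epsilon[} f(\psi(t+x))\,\frac{dt}{t}
=\frac1\pi\int_{\mathbb R\setminus ]-\epsilon,\epsilon[} f\bigl(\psi(x)+at\bigr)\,\frac{dt}{t}.
\]
Substitute $u=at$. Because $a>0$, orientation is preserved and $dt/t=du/u$, so this becomes
\[
\frac1\pi\int_{\mathbb R\setminus ]-a\epsilon,a\epsilon[} f(\psi(x)+u)\,\frac{du}{u},
\]
which is the truncation of $Hf$ at the point $\psi(x)$ with parameter $a\epsilon$. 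As $\epsilon\to0^+$ we also have $a\epsilon\to0^+$. So the limit on the left exists if and only if the limit defining $(Hf)(\psi(x))$ exists, and then the two are equal.

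The set $N$ of points where $Hf$ is undefined is null. Since $\psi$ is affine with $a\neq0$, the set $\psi^{-1}(N)$ is also null. Thus the identity holds almost everywhere, and it also confirms directly that $H(f\circ\psi)$ exists a.e.

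Finally, $T$ is measurable and $Hf$ is measurable, being an a.e. limit of continuous truncations. The change of variables $y=\psi(x)$, $dy=a\,dx$, gives
\[
\int_{\psi^{-1}(I)} \abs{T\bigl(H(f\circ\psi)(x)\bigr)}\,dx=\int_{\psi^{-1}(I)}\abs{T\bigl((Hf)(\psi(x))\bigr)}\,dx=\frac1a\int_I \abs{T\bigl((Hf)(y)\bigr)}\,dy.
\]
One side is finite exactly when the other is, which is the claimed equivalence.

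There is no real obstacle here. The only points needing care are that $a>0$ is what keeps the symmetric truncation $]-\epsilon,\epsilon[$ symmetric after rescaling, and that null sets are preserved under $\psi^{-1}$, so the almost-everywhere statements transfer.
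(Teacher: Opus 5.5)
Your proof is correct and follows essentially the same route as the paper: show $H(f\circ\psi)=(Hf)\circ\psi$ a.e. by a substitution in the truncated integrals, then change variables $y=\psi(x)$ in the integral over $I$. The paper substitutes $t\mapsto\psi(t)$ in the first form of the definition, while you substitute $u=at$ in the second form; this is the same computation. Your explicit handling of null sets and your use of $\abs{T(\cdot)}$ in the integrals tighten details the paper leaves implicit.
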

\begin{proof}
The inverse of $\psi$ is given by $\psi^{-1}(x) = (x-b)/a$. It follows that
\[
\int_I T((Hf)(x))  \, dx  \stackrel{x\mapsto \psi(x) }{=} a \int_{\psi^{-1}(I)} T((Hf)(\psi(x)))  \, dx. 
\]
The result now follows from
\begin{align*}
(Hf)(\psi(x)) &= \frac{1}{\pi}\lim_{\epsilon\to 0}\int_{\mathbb R\setminus ]\psi(x)-\epsilon, \psi(x)+\epsilon[} f(t) \frac{1}{t - \psi(x)}  \, dt \stackrel{t\mapsto \psi(t)}{=}  \frac{1}{\pi} \lim_{\epsilon\to 0}\int_{\mathbb R\setminus ]x-\epsilon/a, x+\epsilon/a[} f(\psi(t)) \frac{1}{t - x}  \, dt \\
&=  (H(f\circ \psi))(x).
\end{align*}
\end{proof}

We remark that the Lebesgue-point property alone does not exclude a singularity of the Hilbert transform. Indeed, if
$$
    h(x)=\frac{\chi_{\{x>0\}}}{\log(1/x)}
$$
near $0$, then $0$ is a Lebesgue point of $h$ with value $0$, whereas
$$
    \int_0^\delta \frac{h(t)}{t}\,dt
    =\int_0^\delta \frac{dt}{t\log(1/t)}
    =\infty.
$$
Thus, the Hilbert transform of $h$ has a singularity at $0$.

The following lemma (with $x=0$ and $y=2\pi$) gives a situation in which $f$ can be replaced by a finite periodic extension of $f|_{[0,2\pi[}$ without introducing a new blow-up point for its Hilbert transform on $]0,2\pi[$. This allows us to glue together the endpoints after a suitable cut and to retrieve non-$Y$ integrability of the conjugate function from the Hilbert transform.

\begin{lem}\label{lem:mainLem}
Let $f\in L^1(\R)$ such that there is $x_0\in \R$ that is a Lebesgue point of $f$ with $f(x_0)\geq c$, and a Lebesgue point $y_0>x_0$ with $f(y_0)\leq -c$ for some $c>0$. Let  
\[
f_e(x):=\sum_{k=-1}^1 f(x+k(y_0-x_0))\chi_{[x_0,y_0[}(x+k(y_0-x_0)), \quad x\in \mathbb R,
\]
that is a $(y_0-x_0)$-periodic function on $[x_0-(y_0-x_0),y_0+(y_0-x_0)[$, where the values of $f$ are the ones produced on $[x_0,y_0[$. 
Then there exists $g\in L^\infty([x_0,y_0])$ such that almost everywhere on $[x_0,y_0]$, it holds that 
\[
Hf_e \geq Hf  - g.
\]
\end{lem}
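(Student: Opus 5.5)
The plan is to compare $Hf_e$ and $Hf$ directly on $]x_0,y_0[$ through their difference, which only involves values outside $[x_0,y_0]$, and to show that this difference is bounded below by a constant $-C$. Then $g\equiv C$ works. Put $L=y_0-x_0$ and $\varphi=f_e-f$. By construction $f_e=f$ on $[x_0,y_0[$, so $\varphi=0$ there. Also $f_e\in L^1(\R)$ with $\norm{f_e}_1\le 3\norm{f}_1$, so $Hf_e$ and $Hf$ both exist a.e. For a.e.\ $x\in\,]x_0,y_0[$ the singularity at $t=x$ lies in a region where $\varphi$ vanishes, hence
\[
(Hf_e)(x)-(Hf)(x)=\frac1\pi\int_{\R\setminus[x_0,y_0]}\varphi(t)\,\frac{dt}{t-x},
\]
an absolutely convergent integral for each fixed $x$. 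The only danger is $x$ close to $x_0$ or $y_0$.

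\textbf{Choosing $\delta_0$.} For $0<s<L$ we have $f_e(x_0-s)=f(y_0-s)$ and $f_e(y_0+s)=f(x_0+s)$. Set
\[
\Phi_-(s)=\int_{x_0-s}^{x_0}\varphi(t)\,dt=\int_0^s\bigl(f(y_0-u)-f(x_0-u)\bigr)\,du .
\]
Since $x_0$ and $y_0$ are Lebesgue points (only the convergence of one-sided averages is needed), $\Phi_-(s)=s\bigl(f(y_0)-f(x_0)\bigr)+o(s)\le -2cs+o(s)$. Choose $0<\delta_0<L$ so small that $\Phi_-(s)\le 0$ for $0<s\le\delta_0$. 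Symmetrically, with $\Phi_+(s)=\int_{y_0}^{y_0+s}\varphi=\int_0^s\bigl(f(x_0+u)-f(y_0+u)\bigr)\,du\ge 2cs-o(s)$, shrink $\delta_0$ so that also $\Phi_+(s)\ge0$ for $0<s\le\delta_0$. This choice is exactly where the remark after Lemma~\ref{HilbertTransformAndTranslations} matters: we cannot control $\int\abs{f(t)-f(x_0)}/\abs{t-x}\,dt$, so we must work only with the signed primitives $\Phi_\pm$, in the spirit of \eqref{eq:eqForDeltaHilbertTransform}.

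\textbf{The piece near $x_0$.} Let $d=x-x_0>0$. Integrating by parts,
\[
\frac1\pi\int_{x_0-\delta_0}^{x_0}\varphi(t)\,\frac{dt}{t-x}
=-\frac1\pi\int_0^{\delta_0}\frac{\Phi_-'(s)}{d+s}\,ds
=-\frac1\pi\left[\frac{\Phi_-(\delta_0)}{d+\delta_0}+\int_0^{\delta_0}\frac{\Phi_-(s)}{(d+s)^2}\,ds\right].
\]
The boundary term at $s=0$ vanishes because $\Phi_-(0)=0$ and $d>0$. Both bracketed terms are $\le0$, so this contribution is $\ge0$, uniformly in $x$.

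\textbf{The piece near $y_0$.} With $d'=y_0-x>0$, the same computation gives
\[
\frac1\pi\int_{y_0}^{y_0+\delta_0}\frac{\varphi(t)}{t-x}\,dt=\frac1\pi\left[\frac{\Phi_+(\delta_0)}{d'+\delta_0}+\int_0^{\delta_0}\frac{\Phi_+(s)}{(d'+s)^2}\,ds\right]\ge0.
\]

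\textbf{The remaining region.} On $\R\setminus[x_0-\delta_0,y_0+\delta_0]$ we have $\abs{t-x}\ge\delta_0$ for every $x\in[x_0,y_0]$. Hence that part of the integral is bounded in absolute value by
\[
\frac{1}{\pi\delta_0}\bigl(\norm{f_e}_1+\norm{f}_1\bigr)\le\frac{4\norm{f}_1}{\pi\delta_0}=:C .
\]
Summing the three pieces gives $Hf_e\ge Hf-C$ a.e.\ on $[x_0,y_0]$, so the constant function $g\equiv C\in L^\infty([x_0,y_0])$ proves Lemma~\ref{lem:mainLem}.

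The main obstacle is the endpoint analysis. The crude bound fails there, as the logarithmic example shows. The resolution is that only a lower bound is needed, and the signs line up. To the left of $x_0$ the periodic copy brings values near $f(y_0)\le -c$ in place of values near $f(x_0)\ge c$, and the kernel is negative there. To the right of $y_0$ the opposite happens with a positive kernel. After integrating by parts, the sign of the primitives $\Phi_\pm$ on $]0,\delta_0]$ controls everything, and that sign is guaranteed by the Lebesgue-point hypotheses.
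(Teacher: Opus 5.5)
Your proof is correct and uses the same mechanism as the paper's. The Lebesgue-point hypotheses fix the sign of the averages of $f_e-f$ just outside $x_0$ and $y_0$, and an integration by parts (which is exactly what the identity \eqref{eq:eqForDeltaHilbertTransform} encodes via Fubini) turns that sign into a one-sided bound near the endpoints, with the far part bounded by a multiple of $\norm{f}_{L^1(\R)}/\delta_0$. Your execution is cleaner than the paper's: because $\varphi=f_e-f$ vanishes on $[x_0,y_0[$, you need no principal values or truncated operators $H_\epsilon^{(\delta_0)}$, $M$, and integrating by parts from $x_0$ and $y_0$ rather than from $x$ gives the constant $g\equiv 4\norm{f}_{L^1(\R)}/(\pi\delta_0)$ directly.
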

\begin{proof}
If $h$ is a function, $h_x(t) := h(t+x)$.

Let $\delta_0 \in]0,(y_0-x_0)/2[$ be small enough to ensure 
\begin{equation}\label{eq:DensityAppl1}
\frac{1}{ h } \int_{x_0-h}^{x_0} f(t)  \, dt \geq \frac{c}{2},\quad \frac{1}{ h } \int_{y_0}^{y_0 + h} f_e(t)  \, dt \geq \frac{c}{2},
\end{equation}
\begin{equation}\label{eq:DensityAppl2}
\frac{1}{ h } \int_{y_0}^{y_0 + h} (-f) \, dt \geq \frac{c}{2} \quad \text{and} \quad \frac{1}{ h } \int_{x_0-h}^{x_0} (-f_e(t) ) \, dt \geq \frac{c}{2}
\end{equation}
for every $0<h<2\delta_0$.

For $x \in ]x_0,x_0+\delta_0[$ and $0<\epsilon < x-x_0$, we have

\begin{align*}
\frac{1}{2}H_{\epsilon}^{(\delta_0)} (M(f_x - f_{e,x})) &= \int_{\epsilon}^{\delta_0} \frac{1}{t}  \int_0^t \big(  (f_x(s) - f_x(-s) )  -  (f_{e,x}(s) - f_{e,x}(-s) ) \big) \, ds \, \frac{dt}{t} \\
&= \int_{\epsilon}^{\delta_0} \frac{1}{t}  \int_0^t \big(  f_x(s)  -  f_{e,x}(s)  \big) \, ds \, \frac{dt}{t} - \int_{\epsilon}^{\delta_0} \frac{1}{t}  \int_0^t \big(  f_x(-s)  - f_{e,x}(-s) \big) \, ds \, \frac{dt}{t} \\
&=- \int_{\epsilon}^{\delta_0} \frac{1}{t}  \int_{-x}^{t-x} \big(  f(-s)  - f_e(-s) \big) \, ds \, \frac{dt}{t} \\
&= -\int_{x-x_0}^{\delta_0} \frac{1}{t}  \int_{-x_0}^{t-x} \big(  f(-s)  - f_e(-s) \big) \, ds \, \frac{dt}{t} \\
&= \int_{x-x_0}^{\delta_0} \frac{t+x_0-x}{t} \Bigg(   \frac{1}{t+x_0-x}  \int_{x_0-(t+x_0-x)}^{x_0}   f_e(s) \, ds \\    
&\hspace{3cm}  - \frac{1}{t+x_0-x}  \int_{x_0-(t+x_0-x)}^{x_0}   f(s) \, ds   \Bigg) \, \frac{dt}{t}.
\end{align*}
Since $0<x-x_0<t<\delta_0$ implies $0<t+x_0-x<\delta_0$, using \eqref{eq:DensityAppl1} and \eqref{eq:DensityAppl2} we obtain
\begin{equation}\label{eq:HepsOnRightNeightbor}
\frac{1}{2} \limsup_{\epsilon\to 0^+}H_{\epsilon}^{(\delta_0)} (M(f_x - f_{e,x})) \leq  \int_{x-x_0}^{\delta_0} \frac{t+x_0-x}{t^2}  \bigg(   \Big( - \frac{c}{2}\Big)  - \frac{c}{2}  \bigg)  \, dt  \leq  0 ,\quad  x \in ]x_0,x_0+\delta_0[.
\end{equation}

Similarly, for $x\in ]y_0 - \delta_0, y_0[$ and $0<\epsilon<y_0-x$, we have 
\begin{align*}
\frac{1}{2}H_{\epsilon}^{(\delta_0)} (M(f_x - f_{e,x})) &= \int_{\epsilon}^{\delta_0} \frac{1}{t}  \int_0^t \big(  f_x(s)  -  f_{e,x}(s)  \big) \, ds \, \frac{dt}{t} - \int_{\epsilon}^{\delta_0} \frac{1}{t}  \int_0^t \big(  f_x(-s)  - f_{e,x}(-s) \big) \, ds \, \frac{dt}{t} \\
&= \int_{\epsilon}^{\delta_0} \frac{1}{t}  \int_0^t \big(  f_x(s)  -  f_{e,x}(s)  \big) \, ds \, \frac{dt}{t}\\
&= \int_{y_0 - x}^{\delta_0} \frac{1}{t}  \int_{y_0}^{t+x} \big(  f_x(s-x)  -  f_{e,x}(s-x)  \big) \, ds \, \frac{dt}{t} \\
&= \int_{y_0 - x}^{\delta_0}  \frac{t+x-y_0}{t} \Bigg(    \frac{1}{t+x-y_0}  \int_{y_0}^{y_0+(t+x-y_0)}   f(s) \, ds \\
&\hspace{3cm}-  \frac{1}{t+x-y_0}  \int_{y_0}^{y_0+(t+x-y_0)} f_e(s) \, ds  \Bigg) \, \frac{dt}{t}.
\end{align*}
and since $0<t+x-y_0<\delta_0$, we obtain

\begin{equation}\label{eq:HepsOnRightNeightbor}
\frac{1}{2} \limsup_{\epsilon\to 0^+}H_{\epsilon}^{(\delta_0)} (M(f_x - f_{e,x})) \leq  \int_{y_0 - x}^{\delta_0} \frac{t+x-y_0}{t^2}  \bigg(  \Big( - \frac{c}{2}\Big) - \frac{c}{2}    \bigg)  \, dt  \leq  0 ,\quad  x \in ]y_0-\delta_0,y_0[.
\end{equation}
We have now obtained that for $x\in N:=]x_0,x_0+\delta_0[\cup ]y_0-\delta_0,y_0[$,
\[
\frac{1}{2} \limsup_{\epsilon\to 0^+}H_{\epsilon}^{(\delta_0)} (M(f_x - f_{e,x})) \leq 0
\]
holds. By \eqref{eq:eqForDeltaHilbertTransform}, using the fact that $x\in]x_0,y_0[$ implies $(Mf_{e,x})(\epsilon) = (Mf_x)(\epsilon) $ when $\epsilon<\min\{x-x_0,y_0-x\}$, it follows that for almost every $x\in N$
\[
0 \geq \limsup_{\epsilon\to 0^+} H_{\epsilon}^{(\delta_0)}(f_x-f_{e,x}) - (M(f_x-f_{e,x}))(\delta_0) = \lim_{\epsilon\to 0^+} H_{\epsilon}^{(\delta_0)}f_x  -  \lim_{\epsilon\to 0^+} H_{\epsilon}^{(\delta_0)} f_{e,x} - (M(f_x-f_{e,x}))(\delta_0),
\]
since the limits exists and are real numbers a.e. $x\in N$. 

Finally, since 
\[
\esssup_{x\in\R}\sup_{h\in B_{L^1(\R)}}\abs{ \pi H(h)(x)- \lim_{\epsilon\to 0^+} H_{\epsilon}^{(\delta_0)}(h_x) } \leq \frac{1}{\delta_0},
\]
it follows that for almost every $x\in N$ (notice that $3\norm{f}_{L^1(\mathbb R)} \geq \norm{f_e}_{L^1(\mathbb R)}$)
\[
\pi H(f_e)(x) \geq  -\frac{4\norm{f}_{L^1(\mathbb R)}}{\delta_0} +  \pi H(f)(x) -   \sup_{x\in N} (M(f_x-f_{e,x}))(\delta_0),
\]
proving the statement of lemma when $x\in N$. We conclude the proof by noting that since $f=f_e$ on $]x_0,y_0[$, it holds that $H(f-f_e)(x)$ is bounded for $x\in ]x_0,y_0[\setminus N$.

\end{proof}

We are now ready to prove the main local integrability result for real valued functions.

\begin{thm}\label{thm:local}
Let $Y:\R\to[0,\infty)$ be convex, and suppose that
$$
    \int_0^\infty Y(x)e^{-x}\,dx=\infty.
$$
Let $f\in L^1([0,2\pi[)$ and $I\subset ]0,2\pi[$ be an interval with $\abs{I}<2\pi$. Assume there are Lebesgue points of $f$, $x_0,y_0\in I$ with $x_0<y_0$ such that $f(x_0)\geq \pi/2$ and $f(y_0)\leq -\pi/2$, and $\abs{f|_I^{-1}(]-\pi/2,\pi/2[)}=0$. Then 
\begin{equation}\label{eq:nonExpInt}
Y\big(\tilde{f} \big) \notin L^1(I).
\end{equation}

\end{thm}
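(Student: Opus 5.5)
We argue by contradiction, following the ``surgical'' strategy described in the introduction: cut $I$ at $x_0$ and $y_0$, glue the piece $[x_0,y_0[$ periodically, and reduce to Theorem~\ref{thm:global}. So suppose $Y(\tilde f)\in L^1(I)$. We lift $f$ to $\R$ by setting $f=0$ outside $[0,2\pi[$. The kernel $\frac1{2\pi}\cot\frac{s-t}{2}$ differs from $\frac{1}{\pi}\frac{1}{s-t}$ (up to sign convention) by a bounded function for $s,t$ in a compact subset avoiding the cut. Hence $\tilde f$ and $\pm Hf$ differ by a bounded function on $]x_0,y_0[$. Since $Y$ is convex, it grows at most linearly under a bounded shift in the sense that $Y(x-M)\le Y(x)+C(1+|x|)$. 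Combined with $\tilde f\in L^1$ (weak-type plus the one-sidedness argument, or simply $x^+\lesssim 1+Y(x)$ as in the proof of Theorem~\ref{thm:global}), this shows that $Y(\pm Hf)\in L^1(]x_0,y_0[)$.

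Next apply Lemma~\ref{lem:mainLem} with $c=\pi/2$: there is $g\in L^\infty$ with $Hf_e\ge Hf-g$ a.e.\ on $[x_0,y_0]$. We would like $Y(Hf_e)$ to be controlled by $Y(Hf)$. A one-sided inequality suffices on the side where $Y$ is increasing; where $Y$ is decreasing one uses the symmetric version. To arrange this, it is better to first reduce to $Y$ nondecreasing, replacing $Y$ by $Y(x)\chi_{x\ge x_1}$-type convex minorants $\max\{Y(x_1)+a(x-x_1),0\}$ combined with $Y$ on $[x_1,\infty)$, which keeps the divergence condition. Then $Y(Hf_e)\le Y(Hf+\|g\|_\infty)\le C(1+Y(Hf)+|Hf|)$ is integrable on $]x_0,y_0[$.

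Now rescale with $\psi(x)=x_0+(y_0-x_0)x/(2\pi)$ via Lemma~\ref{HilbertTransformAndTranslations}, so the period becomes $2\pi$ and the integration interval becomes $]0,2\pi[$. On $]0,2\pi[$, the Hilbert transform of the three-period extension differs from the conjugate function of the $2\pi$-periodic function $F:=f\circ\psi|_{[0,2\pi[}$ by a bounded function. This holds because $\frac12\cot\frac u2=\sum_k \frac{1}{u+2\pi k}$ and the tails $|k|\ge2$ contribute a bounded kernel on $]0,2\pi[\times]0,2\pi[$ when $F$ is integrated only over three copies. Again this bounded error is absorbed by the linear-growth bound, giving $Y(\tilde F)\in L^1(\T)$. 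But $F\in L^1(\T)$, $|F|\ge\pi/2$ a.e.\ by the hypothesis $|f|_I^{-1}(]-\pi/2,\pi/2[)|=0$, and $0<|F^{-1}([\pi/2,\infty[)|<2\pi$ because $x_0,y_0$ are Lebesgue points with values $\ge\pi/2$ and $\le-\pi/2$. This contradicts Theorem~\ref{thm:global} applied to the reduced $Y$.

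The main obstacle is the bookkeeping of the bounded errors and of the monotonicity direction. Lemma~\ref{lem:mainLem} gives only a one-sided comparison, $Hf_e\ge Hf-g$. Convex $Y$ need not be monotone, so one must verify that replacing $Y$ by a nondecreasing convex function $\le C(1+Y)$, with the same critical growth, is legitimate. One must also check the sign convention between $\tilde f$ and $Hf$ so that the inequality points the right way; if it points the wrong way, apply the argument to $-f$ with the roles of $x_0,y_0$ reversed through the reflection $x\mapsto -x$.
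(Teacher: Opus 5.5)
You have the paper's architecture: make $Y$ monotone, compare $\tilde f$ with $Hf$ on $]x_0,y_0[$, apply Lemma~\ref{lem:mainLem}, rescale by $\psi$, compare $Hf_e$ with $\widetilde F$, and invoke Theorem~\ref{thm:global}. The kernel comparisons and your check that $F$ satisfies the hypotheses of Theorem~\ref{thm:global} are fine. The gap is where you expected it: the bounded additive errors cannot be absorbed into $Y$. The inequalities $Y(x-M)\le Y(x)+C(1+|x|)$ and $Y(Hf+\|g\|_\infty)\le C(1+Y(Hf)+|Hf|)$ are false for general convex $Y$. The direction you actually need is $Y(x+M)\lesssim 1+Y(x)+|x|$ with $M>0$. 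Take $Y(x)=e^{x^2}$: it is convex, nonnegative, has $\int_0^\infty Y(x)e^{-x}\,dx=\infty$, and is unchanged on $[x_1,\infty)$ by your modification. Yet $Y(x+M)/Y(x)=e^{2Mx+M^2}\to\infty$. For non-monotone $Y$ such as $\cosh$, even the inequality as you wrote it fails as $x\to-\infty$. Moreover, the $|x|$-term would require $|Hf|\in L^1(]x_0,y_0[)$, which you do not have. On a subarc, $Y(\tilde f)\in L^1(I)$ only gives $(\tilde f)^+\in L^1(I)$, and Lemma~\ref{lem:one-sided} is a statement about all of $\T$. As written, your argument covers only $Y$ with $Y(x+M)\le C_M(1+Y(x))$, roughly those of at most exponential growth.

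The repair, which is what the paper does, is to put the constant into the convex function rather than into its argument. First settle the sign instead of hedging. Since $\frac{1}{2\pi}\cot\frac{s-t}{2}=\frac{1}{\pi(s-t)}+O(1)$ on the relevant range, you get $\tilde f=-Hf+O(1)$ on $]x_0,y_0[$ and $\widetilde F=-(Hf_e)\circ\psi+O(1)$ on $]0,2\pi[$. Read Lemma~\ref{lem:mainLem} as $-Hf_e\le -Hf+\|g\|_\infty$; this gives $\widetilde F\le \tilde f\circ\psi+M$ a.e.\ on $]0,2\pi[$ for some constant $M$. For your nondecreasing $Y$ this yields $Z(\widetilde F)\le Y(\tilde f\circ\psi)$ with $Z(x):=Y(x-M)$. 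The function $Z$ is convex, nonnegative, and satisfies $\int_0^\infty Z(x)e^{-x}\,dx=e^{-M}\int_{-M}^\infty Y(y)e^{-y}\,dy=\infty$. So Theorem~\ref{thm:global} applied to $Z$ (not to $Y$) contradicts $Y(\tilde f)\in L^1(]x_0,y_0[)$, and no integrability of $|Hf|$ is needed.

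Two further points on signs. Your displayed step $Y(Hf_e)\le Y(Hf+\|g\|_\infty)$ uses $Hf_e\le Hf+\|g\|_\infty$. That is the reverse of the lemma, and it fails near $x_0$ and $y_0$, where $Hf_e-Hf\to+\infty$. Your proposed fallback could not have fixed a wrong sign either. The function $\check f(x):=-f(-x)$ satisfies $H\check f(x)=Hf(-x)$ and again has its positive Lebesgue point to the left of its negative one, so the reflection maps the configuration to itself.
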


\begin{proof}
Since  $\int_0^\infty Y(x)e^{-x} \, dx = \infty$, $Y$ is unbounded, so we can choose a $b>0$ such that $Y(b)>Y(0)$. By convexity, for $b\le y<x$,
$$
    \frac{Y(x)-Y(y)}{x-y} \ge \frac{Y(b)-Y(0)}b >0,
$$
which shows that $Y$ is increasing on $[b,\infty)$. Define
$$
    Y_b(x) = \begin{cases}
        0, & x\le b,\\
        Y(x)-Y(b), & x>b.
    \end{cases}
$$
Then $Y_b$ is convex, increasing, $0\leq Y_b\leq Y$, and 
$$ 
    \int_0^\infty Y_b(x)e^{-x}\,dx
    =\int_b^\infty \bigl(Y(x)-Y(b)\bigr)e^{-x}\,dx
    =\infty.
$$

 We begin by noting that it is sufficient to prove that
\[
Y_b\big(\tilde{f} \big) \notin L^1(]x_0,y_0[).
\]
Since $Y_b$ is increasing and
\begin{equation}\label{HilbSimilarToHarmonicConj}
 \abs{H(f)   +   \tilde{f}    }\leq m_1
\end{equation}
on $]x_0,y_0[$ for some constant $m_1$, it suffices to prove that

\begin{equation}\label{eq:HilbertTransformNotIntegrable1}
Y_b\big(-H(f) - m_1 \big) \notin L^1(]x_0,y_0[),
\end{equation}
where the function $f$ is interpreted as zero on $\mathbb R\setminus [0,2\pi[$. 

By Lemma \ref{HilbertTransformAndTranslations}, \eqref{eq:HilbertTransformNotIntegrable1} holds if and only if
\[
Y_b\big(-H(f\circ \psi) - m_1 \big) \notin L^1(]0,2\pi[),
\]
where $\psi(x) = \frac{y_0-x_0}{2\pi} x + x_0$.

Define
\[
f_e(x):=\sum_{k=-1}^1 (f\circ \psi)(x+2\pi k)\chi_{[0,2\pi[}(x+2\pi k).
\]
Since $0$ and $2\pi$ are Lebesgue points of $(f\circ \psi)$ with $(f\circ \psi)(0)\geq \pi/2$ and $(f\circ \psi)(2\pi)\leq -\pi/2$, an application of Lemma \ref{lem:mainLem} yields
\begin{equation}\label{eq:ApplLem}
Y_b\big(-H(f_e) - m_2 \big) \notin L^1(]0,2\pi[)  \quad \Longrightarrow \quad Y_b\big(-H(f\circ \psi) - m_1 \big) \notin L^1(]0,2\pi[).
\end{equation}
for some $m_2 \geq m_1$. Moreover, due to the $2\pi$-periodicity of $f_e $ on $[-2\pi,4\pi[$, we have an estimate similar to \eqref{HilbSimilarToHarmonicConj}, which yields
\[
Y_b\big(-H(f_e)  - m_2 \big) \notin L^1(]0,2\pi[)  \quad \Longleftarrow \quad Y_b\big(\widetilde{f_e } - m_3\big) \notin L^1(]0,2\pi[),
\]
for some $m_3$. Since $f_e|_{[0,2\pi[}^{-1}([0,\infty[)$ is neither $[0,2\pi[$ nor the $\emptyset$ m.n.s. and $f_e\in L^1[0,2\pi[$ (Lemma \ref{HilbertTransformAndTranslations}), we can apply Theorem \ref{thm:global} using $x\mapsto Y_b(x - m_3)$ (since the conditions on $Y_b$ are translational invariant) to obtain that $Y_b\big(\widetilde{f_e } - m_3\big) \notin L^1(]0,2\pi[)$ holds, which we have shown implies that $Y_b\big(\tilde{f} \big) \notin L^1(I)$, and since $Y\geq Y_b$ on $\mathbb R$, we obtain 
\[
Y\big(\tilde{f} \big) \notin L^1(I).
\]

\end{proof}

In the local case, we also refer to Remark \ref{rem:CompareWithEarlierResult} to highlight the precision of nonintegrability.

\section{About the complex case}\label{sec:complex}

\bigskip

The main result of this article says the following: Let $I\subset [0,2\pi[$ be an interval and let $S'$ be an open strip in $\C$ of width $\pi$. If $f\colon I\mapsto \C\setminus S'$ belongs to $L_{\C}^1(\T)$ and is such that the inverse image of each component of $\C\setminus S'$ has positive measure, then $\textrm{exp}(\abs{\smash{\tilde{f}}}) $ is not integrable on $I$. 

In order to rigorously state a more general result, we define the following partition of $\C$:

\[
C_{\leq -\frac{\pi}{2}} := \{ z\in\C : \Re z \leq -\pi/2 \}, \quad  C_{\geq \frac{\pi}{2}} := \{ z\in\C : \Re z \geq \pi/2 \},\text{ and } \quad S := \{ z\in\C : -\pi/2 < \Re z < \pi/2 \}.
\]

\begin{thm}\label{thm:complex}
Let $Y:\R\to[0,\infty)$ be convex, and suppose that
$$
    \int_0^\infty Y(x)e^{-x}\,dx=\infty.
$$
 Let $I\subset [0,2\pi[$ be an interval and let $f\in L_{\C}^1([0,2\pi[)$ and assume we can choose $c\in\C, \theta \in \R$ such that $f|_I^{-1} \big( c+e^{i\theta} C_{\geq \frac{\pi}{2}}\big)$ and $f|_I^{-1} \big(c+e^{i\theta}C_{\leq -\frac{\pi}{2}}\big)$ contains density points $x_0$ and $y_0$, respectively, with $x_0<y_0$.
If $m(f|_I^{-1}(c+e^{i\theta} S)) = 0$, then $Y\bigl(\Re(e^{-i\theta} \tilde{f})\bigr) \notin L^1(I)$, in particular, $\EXP(e^{-i\theta} \tilde{f}) \notin L_{\C}^1(I)$.
\end{thm}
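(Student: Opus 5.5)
The plan is to reduce Theorem \ref{thm:complex} to the real-valued local result, Theorem \ref{thm:local}, by a rotation and a translation. Define
\[
g := \Re\bigl(e^{-i\theta}(f-c)\bigr),
\]
a real-valued function in $L^1([0,2\pi[)$. The map $z\mapsto e^{-i\theta}(z-c)$ sends $c+e^{i\theta}C_{\geq \frac{\pi}{2}}$, $c+e^{i\theta}C_{\leq -\frac{\pi}{2}}$ and $c+e^{i\theta}S$ onto $C_{\geq\frac{\pi}{2}}$, $C_{\leq-\frac{\pi}{2}}$ and $S$. Hence a.e.\ on $I$ we have $g\geq \pi/2$ on $f|_I^{-1}(c+e^{i\theta}C_{\geq\frac\pi2})$, $g\leq -\pi/2$ on $f|_I^{-1}(c+e^{i\theta}C_{\leq-\frac\pi2})$, and $\abs{g|_I^{-1}(]-\pi/2,\pi/2[)}=0$. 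Since conjugation is real-linear, commutes with multiplication by real constants, and kills constants, we get
\[
\Re(e^{-i\theta}\tilde f)=\cos\theta\,\widetilde{\Re f}+\sin\theta\,\widetilde{\Im f}=\tilde g .
\]
So it suffices to show $Y(\tilde g)\notin L^1(I)$.

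The hypotheses of Theorem \ref{thm:local} ask for Lebesgue points $x_0<y_0$ of $g$ in $I$ with $g(x_0)\ge\pi/2$ and $g(y_0)\le-\pi/2$. Here we are only given density points $x_0$ of $E_+:=g|_I^{-1}([\pi/2,\infty[)$ and $y_0$ of $E_-:=g|_I^{-1}(]-\infty,-\pi/2])$. This gap is the main step to handle. First, $x_0$ being a density point of $E_+$ means $E_+$ has positive measure in every neighbourhood of $x_0$. Almost every point of $E_+$ is a Lebesgue point of $g$ lying in $E_+$, so we can pick such a point $x_0'$ arbitrarily close to $x_0$, and likewise $y_0'$ close to $y_0$. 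Keeping $x_0'$ and $y_0'$ close enough to $x_0$ and $y_0$ preserves $x_0'<y_0'$. If $x_0$ or $y_0$ lies at an endpoint of $I$, we choose the points from the side inside $I$, which is possible since the density is $1$. Also, $I\subset[0,2\pi[$ may contain $0$, while Theorem \ref{thm:local} wants $I\subset]0,2\pi[$. This is handled either by shrinking $I$ to $]x_0'-\eta,y_0'+\eta[\cap I$ (nonintegrability on a subinterval implies it on $I$), or by a rotation of $\T$, i.e.\ a translation of the parameter, which commutes with conjugation. The condition $\abs{I}<2\pi$ is then automatic after shrinking.

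Theorem \ref{thm:local} applied to $g$ on the shrunken interval $J\subset I$ gives $Y(\tilde g)\notin L^1(J)$. Since $Y\ge0$, this gives $Y(\tilde g)\notin L^1(I)$, which is the first claim. For the last claim, take $Y=\exp$, which is convex and satisfies $\int_0^\infty e^{x}e^{-x}\,dx=\infty$. Then
\[
\abs{\EXP(e^{-i\theta}\tilde f)}=e^{\Re(e^{-i\theta}\tilde f)}=e^{\tilde g}\notin L^1(I).
\]

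The main obstacle I expect is the density-point versus Lebesgue-point issue and the positioning of the points inside the open interval. Choosing nearby Lebesgue points from sets of positive measure should resolve it. If the intended reading is that Theorem \ref{thm:local} needs density points only of preimages, the same reduction applies verbatim.
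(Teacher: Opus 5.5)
Your proposal is correct and follows the paper's proof: you pass to $\Re\bigl(e^{-i\theta}(f-c)\bigr)$, identify its conjugate with $\Re(e^{-i\theta}\tilde f)$, replace the density points by nearby Lebesgue points in the two preimages (the paper uses disjoint closed intervals around $x_0,y_0$ for this), apply Theorem \ref{thm:local}, and use $\abs{e^z}=e^{\Re z}$ for the exponential. Your shrinking of $I$ to meet the hypotheses $I\subset\,]0,2\pi[$ and $\abs{I}<2\pi$ of Theorem \ref{thm:local} supplies a detail the paper leaves unstated, and your endpoint case is vacuous, since a density point of a subset of $I$ must lie in the interior of $I$.
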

\begin{proof}
Consider the function $g:= e^{-i\theta} (f - c)$, for which $m(g^{-1}(S))=0$. According to the assumption 
\[
g^{-1}\big(  C_{\geq \frac{\pi}{2}} \big) = f^{-1} \big( c+e^{i\theta} C_{\geq \frac{\pi}{2}}\big)
\]
contains a density point $x_0$ and
\[
g^{-1}\big(  C_{\leq -\frac{\pi}{2}} \big) = f^{-1} \big( c+e^{i\theta} C_{\leq -\frac{\pi}{2}}\big)
\]
contains a density point $y_0>x_0$. Moreover, since they are density points, we can find disjoint closed intervals $ I_{x_0}\ni x_0$ and $I_{y_0}\ni y_0$ such that 
\[
I_{x_0}\cap g^{-1}\big(  C_{\geq \frac{\pi}{2}} \big) \quad\text{ and }\quad  I_{y_0}  \cap g^{-1}\big(  C_{\leq -\frac{\pi}{2}} \big)
\]
have positive measures. Therefore, we can find Lebesgue points for $\Re g$ from each of these sets. It follows that $\Re g$ satisfies the assumptions of Theorem \ref{thm:local}, and hence,
\[
\int_I Y\bigl(  \Re (e^{-i\theta} \tilde{f}(x)) \bigr)  \,  dx =\int_I Y\bigl( \Re (\COCO{e^{-i\theta}  (f - c)}(x)  ) \bigr)   \,  dx = \int_I Y\bigl(\widetilde{\Re g}(x) \bigr) \,  dx = \infty.
\]

Concerning the exponential function, the statement follows by noting that $\abs{e^z} = e^{\Re z}$.

\end{proof}

In particular, we have:
\begin{cor}
 Let $I\subset [0,2\pi[$ be an interval and let $f\in  L^1([0,2\pi[)$ and assume we can choose $c\in\C, \theta \in \R$ such that both $f|_I^{-1} \big( c+e^{i\theta} C_{\geq \frac{\pi}{2}}\big)$ and $f|_I^{-1} \big(c+e^{i\theta}C_{\leq -\frac{\pi}{2}}\big)$, which are subsets of $I$, have positive Lebesgue measure.  If $m(f|_I^{-1}(c+e^{i\theta} S)) = 0$, then 
\[
\EXP(|\tilde{f}|) \notin L^1(I).
\]
\end{cor}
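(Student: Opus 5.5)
The plan is to reduce the corollary to Theorem \ref{thm:complex} with $Y(x)=e^x$, and then pass from $\Re(e^{-i\theta}\tilde f)$ to $\abs{\tilde f}$.

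First I produce the density points required by Theorem \ref{thm:complex}. Set
\[
A:=f|_I^{-1}\bigl(c+e^{i\theta}C_{\geq \frac{\pi}{2}}\bigr), \qquad B:=f|_I^{-1}\bigl(c+e^{i\theta}C_{\leq -\frac{\pi}{2}}\bigr).
\]
Both sets have positive measure, and almost every point of a measurable set is a density point of it. So there are density points $a\in A$ and $b\in B$ with $a\neq b$.

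If $a<b$, I take $x_0=a$, $y_0=b$ and keep $(c,\theta)$ unchanged. If $a>b$, I replace $\theta$ by $\theta+\pi$. Since $e^{i(\theta+\pi)}C_{\geq\frac{\pi}{2}}=e^{i\theta}C_{\leq-\frac{\pi}{2}}$, this swaps $A$ and $B$. Also $e^{i\pi}S=S$, so the null-set hypothesis $m(f|_I^{-1}(c+e^{i\theta}S))=0$ is unaffected. After the swap, $x_0=b<y_0=a$ have the required types. In both cases Theorem \ref{thm:complex} applies with $Y=\exp$, which is convex, nonnegative and satisfies $\int_0^\infty e^xe^{-x}\,dx=\infty$. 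It gives
\[
\exp\bigl(\Re(e^{-i\theta'}\tilde f)\bigr)\notin L^1(I),
\]
where $\theta'\in\{\theta,\theta+\pi\}$.

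It remains to compare with $\abs{\tilde f}$. Since $\abs{e^{-i\theta'}}=1$,
\[
\Re\bigl(e^{-i\theta'}\tilde f\bigr)\leq \abs{e^{-i\theta'}\tilde f}=\abs{\tilde f}
\]
pointwise a.e. Hence $\exp(\abs{\tilde f})\geq \exp(\Re(e^{-i\theta'}\tilde f))\geq 0$ a.e. on $I$, and monotonicity of the integral shows that $\exp(\abs{\tilde f})$ is not integrable on $I$.

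The only delicate point is the ordering $x_0<y_0$ required by Theorem \ref{thm:complex}, which the rotation by $\pi$ handles. I also read $f\in L^1([0,2\pi[)$ as complex valued, as in Theorem \ref{thm:complex}. This is harmless, since a real-valued $f$ is a special case.
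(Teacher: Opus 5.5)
Your argument is correct and follows the paper's derivation: the corollary is stated as an immediate consequence of Theorem \ref{thm:complex} with $Y=\exp$, combined with the pointwise bound $\Re(e^{-i\theta}\tilde f)\le\abs{\tilde f}$. You also make explicit a step the paper leaves unstated, namely replacing $\theta$ by $\theta+\pi$ to obtain the ordering $x_0<y_0$; this swaps the two half-planes and leaves $S$ unchanged, and you handle it correctly.
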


Using Example \ref{ex:BetterThanExp} together with Theorem \ref{thm:complex} (or Theorem \ref{thm:local} with a slightly different assumption) gives us:

\begin{exam}\label{ex:BetterThanExpLocal}

Let $Y(x)=(e^x/(1+x)-1)\chi_{[0,\infty[}(x)$. Let $E\subset\T$ be measurable with $0<|E|<2\pi$, and $I\subset ]0,2\pi[$ be an interval and $\abs{I}<2\pi$. If $f\in L^1([0,2\pi[)$ with $f\geq \frac{\pi}{2}$ and there are points $x_0,y_0\in I$ with $x_0<y_0$, where $x_0$ is a density point of $E$ and $y_0$ is a density point of $I\setminus E$, then
\[
Y\big(\COCO{(2\chi_E-1)f} \big) \notin L^1(I).
\]

\end{exam}

Notice that the assumption on $E$ holds if $I\cap E$ or $I\setminus E$ is not an interval modulo null sets (compare with \cite[Lemma]{Shargorodsky-1994}.

\section*{Acknowledgements}
The first Author has carried out the work at the University of Reading and Åbo Akademi University, funded by the Magnus Ehrnrooth Foundation, and at the Norwegian University of Science and Technology funded by the Ruth and Nils-Erik Stenbäck Foundation. The second author was supported in part by the Engineering and Physical Sciences Research Council grant EP/Y008375/1 and the Research Council of Finland grant 375631.

We would also like to thank Professor Eugene Shargorodsky for the fruitful discussions in the early stages of the project.

\bibliographystyle{abbrv}
\bibliography{bibliography}

\end{document}